\documentclass[11pt]{amsart}

\usepackage{graphicx}
\usepackage[USenglish,english]{babel}
\usepackage[T1]{fontenc}
\usepackage[latin1]{inputenc}
\usepackage{amsfonts}
\usepackage{amssymb}
\usepackage{amsthm}
\usepackage{amsmath}
\usepackage{tikz}
\usepackage{float}
\usepackage{enumerate}
\usepackage{accents}
\usepackage{mathtools}
\usepackage{mathrsfs}
\usepackage{comment}
\usepackage{afterpage}
\usepackage{bbm}
\usepackage{dsfont}
\usepackage{stmaryrd}
\usepackage{hyperref}
\usepackage{mleftright}
\usepackage{subfig}

\theoremstyle{plain}
\newtheorem{thm}{Theorem}[section]
\newtheorem{lem}[thm]{Lemma}
\newtheorem{prop}[thm]{Proposition}
\newtheorem{cor}[thm]{Corollary}
\newtheorem*{thm*}{Main Theorem}
\newtheorem*{prop*}{Proposition}
\newtheorem*{cor*}{Corollary}

\theoremstyle{definition}

\newtheorem{exam}[thm]{Example}
\newtheorem{rem}[thm]{Remark}
\newtheorem*{quest*}{Question}
\newtheorem{claim}{Claim}

\DeclareMathOperator{\diam}{diam}

\newcommand{\R}{\mathbb{R}}

\newcommand{\cu}{\subseteq}

\newcommand{\lk}{\mathrm{lk}}

\newcommand{\CAT}{{\rm CAT(0)}}

\newcommand{\Hyp}{\mathcal{HYP}}
\newcommand{\FG}{\mathcal{FG}}
\newcommand{\FQ}{\mathcal{FQ}}
\newcommand{\supp}{\mathrm{supp}}

\title{Comparing topologies on the Morse boundary and quasi-isometry invariance}
\author{Merlin Incerti-Medici}

\begin{document}

\maketitle

\begin{abstract}
We compare several topologies on the Morse boundary $\partial_M Y$ of a $\CAT$ cube complex $Y$. In particular, we show that the two topologies introduced by Cashen and Mackay are not equal in general and provide a new description of one of them in the language of cube complexes. As a corollary, we obtain a new approach to tackle the question whether the visual topology induces a quasi-isometry-invariant topology on the Morse boundary. This leads to an obstruction to quasi-isometry-invariance in terms of the behaviour of geodesics under quasi-isometries.
\end{abstract}

\tableofcontents




\section{Introduction} \label{sec:Introduction}

Boundaries at infinity are a common tool in the study of large scale geometric properties. When a group acts geometrically on a metric space, one can study the relations between the group and the boundary of the metric space. A particularly fruitful instance of this is the case of a Gromov-hyperbolic metric space and its visual boundary. Since any quasi-isometry between hyperbolic metric spaces induces a homeomorphism on the visual boundary, we can define the visual boundary of a hyperbolic group as a topological space up to homeomorphism (see \cite{Gromov-HypGps}). This is no longer true, when the group is acting on a non-positively curved space, e.\,g.\,a $\CAT$ space. In \cite{CrokeKleiner}, Croke and Kleiner provided an example of a group acting geometrically on two different quasi-isometric $\CAT$ spaces which have non-homeomorphic visual boundaries. So we cannot associate the visual boundary as a topological space up to homeomorphism to a group.

Charney and Sultan introduced an alternative boundary for $\CAT$ spaces, the contracting boundary, whose homeomorphism class is invariant under quasi-isometry (see \cite{CharneySultan}). Cordes generalised their concept to proper geodesic metric spaces, introducing the Morse boundary (see \cite{Cordes}). In either case, this boundary consists of all points in the visual boundary, which are represented by geodesic rays that have similar properties to geodesic rays in hyperbolic spaces, indicating that these are the `hyperbolic' directions in the space under consideration (see section \ref{sec:Preliminaries} for definitions).

Charney and Sultan equipped the contracting boundary with a quasi-isometry invariant topology by using a direct limit construction. A similar construction is done to define a quasi-isometry-invariant topology on the Morse boundary in greater generality (see \cite{CordesHume}). This allows us to define the Morse boundary of a group as the homeomorphism class of such a direct limit. However, this topology is not first countable in general. Concretely, this topology is not first countable for the group $\mathbb{Z}^2 * \mathbb{Z}$, an example also known as the `tree of flats' (see \cite{Murray}).

Cashen and Mackay introduced two new, coarser topologies (\cite{CashenMackay}). Both topologies are based on the notion of fellow-traveling paths, a useful concept in the study of the visual boundary of hyperbolic spaces. Cashen and Mackay generalized this concept to proper, geodesic metric spaces to introduce the topology of fellow-traveling geodesics $\FG$ and the topology of fellow-traveling quasi-geodesics $\FQ$, the second of which is invariant under quasi-isometries and thus allows us to turn the Morse boundary into a topological invariant of a group (see section \ref{subsec:Morse boundary} for definitions of the Morse boundary, $\FG$ and $\FQ$). If a group acts geometrically on a proper geodesic metric space $Y$, then $(\partial_M Y, \FQ)$ is metrizable and second countable. However, Cashen and Mackay's proof of metrizability relies on the Urysohn metrisation theorem, whose proofs do not provide a metric that is easy to compute. Giving an explicit construction of a metrization of $\FQ$ that stays faithful to the geometric context of Morse boundaries is still an open problem at the time of writing.\\

In this article, we restrict our attention to the Morse boundary of $\CAT$ cube complexes. Cube complexes were introduced by Gromov in \cite{Gromov-HypGps} and have become a central object in geometric group theory over the last decade due to their fruitfulness in solving problems in group theory and low-dimensional topology and due to the fact that many interesting groups are cubulable, i.\,e.\,they act properly and cocompactly on a $\CAT$ cube complex. The class of groups that are cubulable includes Right-angled Artin groups, hyperbolic $3$-manifold groups (\cite{BergersonWise}), most non-geometric $3$-manifold groups (\cite{PiotrWise14}, \cite{HagenPiotr}, \cite{PiotrWise18}), small cancelation groups (\cite{Wise}) and many others. Cubulated groups played a key role in Agol's and Wise's proof of the virtual Haken and virtual fibered conjecture (\cite{Wise11}, \cite{Agol}).\\

When considering a $\CAT$ cube complex $Y$, one can define the following topology on the visual boundary: Fix a vertex $o \in Y$ as a base point and let $h_1, \dots, h_n$ be distinct hyperplanes in $Y$. Denote the visual boundary of $Y$ by $\partial_{\infty} Y$. Define the set

\begin{equation*}
\begin{split}
V_{o, h_1, \dots, h_n} := \{ \xi \in \partial_{\infty} Y | \text{The unique} & \text{ geodesic representative of $\xi$ based}\\
& \text{ at $o$ crosses the hyperplanes $h_1, \dots, h_n$.} \}.
\end{split}
\end{equation*}

The collection $\{ V_{o, h_1, \dots, h_n} \}_{n, h_1, \dots, h_n}$ forms the basis of a topology on $\partial_{\infty} Y$. We denote the subspace topology on the Morse boundary induced by this topology by $\Hyp$.

A comparison of $\FG$ and $\Hyp$ yields

\begin{thm} \label{thm:TheoremA}
Let $Y$ be a uniformly locally finite $\CAT$ cube complex. Then the topologies $\FG$ and $\Hyp$ coincide on the Morse boundary.
\end{thm}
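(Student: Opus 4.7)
My plan is to prove the equality by establishing mutual refinement on the Morse boundary: every basic open neighbourhood of a Morse boundary point in one topology contains a basic open neighbourhood of that point in the other. I therefore split the argument into two directions.

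\textbf{Every $\Hyp$-basic neighbourhood contains an $\FG$-basic neighbourhood.} Fix $\xi \in \partial_M Y$ and a basic neighbourhood $V_{o, h_1, \dots, h_n}$ containing $\xi$. The geodesic $\gamma_\xi$ crosses each $h_i$ within a finite ball around $o$; using $\CAT$-convexity and the fact that hyperplanes are two-sided and convex, I would pick $T$ so large that $\gamma_\xi(T)$ lies in the intersection of the halfspaces opposite to $o$ for $h_1, \dots, h_n$, at $\CAT$-distance greater than some prescribed $\delta > 0$ from each of them. Any $\eta \in \partial_M Y$ whose geodesic representative $\delta$-fellow-travels $\gamma_\xi$ out to distance $T$ then has $\gamma_\eta(T)$ in the same intersection of opposite halfspaces, so $\gamma_\eta$ is forced to cross every $h_i$ on its way from $o$ to $\gamma_\eta(T)$. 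This places $\eta$ in $V_{o, h_1, \dots, h_n}$ and yields the desired containment.

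\textbf{Every $\FG$-basic neighbourhood contains an $\Hyp$-basic neighbourhood.} Fix $\xi \in \partial_M Y$ and an $\FG$-neighbourhood $U$ consisting of boundary points whose geodesic representative $\delta$-fellow-travels $\gamma_\xi$ out to distance $R$. By $\CAT$-convexity of the distance function, this fellow-travelling condition is equivalent to $d(\gamma_\xi(R), \gamma_\eta(R)) \leq \delta$. The aim is to choose hyperplanes $h_1, \dots, h_n$ crossed by $\gamma_\xi$ so that any $\eta \in V_{o, h_1, \dots, h_n} \cap \partial_M Y$ satisfies this inequality. My approach is to take the (finite, by uniform local finiteness) list of all hyperplanes crossed by $\gamma_\xi \ll 0, T \rr$ for some sufficiently large $T \gg R$, and then invoke the contracting/Morse property of $\gamma_\xi$ to argue that any competing geodesic from $o$ crossing all of them is forced to pass through a thin tube around $\gamma_\xi \ll 0, R \rr$. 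Uniform local finiteness also guarantees that the collection of hyperplanes meeting $B(o, T)$ is rich enough to prevent a competing geodesic from taking a long detour inside the halfspace intersection defined by $h_1, \dots, h_n$.

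\textbf{Main obstacle.} The harder direction is the second one. The condition that a geodesic crosses a prescribed finite list of hyperplanes does not, in a general $\CAT$ cube complex, pin the geodesic down to a tube of prescribed thickness, because crossings can occur in different orders and at different ``heights'' along competing geodesic rays. Overcoming this requires using the Morse hypothesis in an essential way: $\gamma_\xi$ is contracting in the cube complex sense, so any geodesic whose endpoint lies in a sufficiently narrow orthant around $\xi$ must track $\gamma_\xi$ on the initial segment. Making this precise is the heart of the argument, and this is where cube complex combinatorics (carriers of hyperplanes, convex hulls of finite hyperplane families, and the uniform local-finiteness bound) carry the real weight.
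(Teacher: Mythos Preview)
Your easy direction ($\Hyp \subset \FG$, i.e.\ every $\Hyp$-basic neighbourhood contains an $\FG$-basic neighbourhood) is correct and essentially identical to the paper's: one chooses $R$ large enough that $\gamma_\xi \setminus B_R(o)$ is at distance greater than $\kappa(\rho,1,0)$ from each $h_i$, and then any geodesic that fellow-travels $\gamma_\xi$ for distance $R$ reaches a point on the far side of every $h_i$ via a short segment that cannot cross any of them. (A small caution: the Cashen--Mackay fellow-travelling condition is that $\gamma_\eta$ \emph{enters} the $\kappa$-neighbourhood of $\gamma_\xi \setminus B_R(o)$, not that $d(\gamma_\xi(t),\gamma_\eta(t))\le\delta$ for all $t\le R$; in $\CAT$ these are essentially equivalent by convexity, but you should phrase the argument using the actual definition.)

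The hard direction ($\FG \subset \Hyp$) is where your proposal has a genuine gap. Taking all hyperplanes crossed by $\gamma_\xi\vert_{[0,T]}$ and then appealing loosely to the contracting property and uniform local finiteness is not yet a proof, and I do not see how to complete it along those lines without an additional structural input. The paper's key tool, absent from your plan, is the Charney--Sultan characterisation (Theorem~\ref{thm:CharneySultan}): a $D$-contracting geodesic in a uniformly locally finite $\CAT$ cube complex crosses an infinite sequence of pairwise $m$-strongly separated hyperplanes $h_{i_1},h_{i_2},\dots$ at points whose consecutive gaps are bounded by some $r$. The paper then selects a \emph{single} hyperplane $h_{i_{N+2}}$ in this chain, far enough along, and shows $U_{o,h_{i_{N+2}}} \subset U_{o,R}(\xi)$. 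Strong separation is exactly what makes the estimate work: if $\beta$ is any geodesic from $o$ crossing $h_{i_{N+2}}$ (hence also $h_{i_N}$ and $h_{i_{N+1}}$), one counts the hyperplanes separating $p=\gamma_\xi\cap h_{i_{N+1}}$ from $q=\beta\cap h_{i_{N+1}}$. Each such hyperplane is crossed by exactly one of the two geodesics before reaching $h_{i_{N+1}}$, so it either meets both $h_{i_N}$ and $h_{i_{N+1}}$ (at most $m$ of those, by $m$-strong separation) or it meets one of the short segments of $\gamma_\xi$ or $\beta$ between $h_{i_N}$ and $h_{i_{N+2}}$ (at most $O(r)$ of those). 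This yields a uniform bound on $d(p,q)$, and Cashen--Mackay's Lemma~4.6 then converts closeness at a single far point into fellow-travelling for distance $R$.

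Without the strongly separated chain there is no evident mechanism forcing a geodesic that crosses your list $h_1,\dots,h_n$ to pass close to $\gamma_\xi$ at any specified height: two geodesic rays from $o$ can cross the same finite set of hyperplanes yet diverge substantially, precisely because of the reordering phenomenon you yourself flag. You have correctly located the obstacle; the missing idea is that the Morse hypothesis manifests combinatorially as the existence of strongly separated ``gates'' along $\gamma_\xi$, and it is those gates, not the full list of crossed hyperplanes, that pin competing geodesics down.
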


It is shown in \cite{CashenMackay} that $\FG \subset \FQ$ and a sufficient condition for equality is given. We show that equality does not hold in general.

\begin{thm} \label{thm:TheoremB}
There exists a uniformly locally finite $\CAT$ cube complex $Y$ that admits a geometric action by a group and $\FG \neq \FQ$ on $\partial_M Y$.
\end{thm}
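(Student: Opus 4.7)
I would construct an explicit uniformly locally finite $\CAT$ cube complex $Y$ with a geometric action and exhibit a sequence in $\partial_M Y$ witnessing $\FG \neq \FQ$. Since Theorem~\ref{thm:TheoremA} gives $\FG = \Hyp$, the $\FG$-convergence of a sequence reduces to a purely combinatorial statement about the initial hyperplanes crossed by the unique geodesic representatives from the basepoint. This is strictly easier to control than the metric fellow-traveling data relevant for $\FQ$, which is the tension I plan to exploit.

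The natural candidate for $Y$ is the standard cube complex associated with $\Z^2 * \Z$, the \emph{tree of flats}. This space admits the obvious geometric action of the group, is uniformly locally finite, and is known to be a rich source of subtle Morse-boundary behaviour (see \cite{Murray}). The crucial feature is that its Euclidean flats provide genuine two-dimensional flexibility for geodesic directions, whereas the branching tree-like structure ensures the existence of Morse directions in the first place.

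My proposed construction is to select a Morse point $\xi\in\partial_M Y$ with geodesic representative $\gamma$ that visits an infinite specific sequence of flats $F_0,F_1,F_2,\dots$, and to build a sequence $(\xi_n)$ of Morse points whose geodesic representatives $\gamma_n$ satisfy two opposing conditions: first, $\gamma_n$ shares with $\gamma$ the same initial \emph{set} of crossed hyperplanes on prefixes whose size tends to infinity (so that $\xi_n\to\xi$ in $\Hyp$, hence in $\FG$ by Theorem~\ref{thm:TheoremA}); and second, $\gamma_n$ exits a relevant flat through a different face than $\gamma$, routing the ray into a distinct branch of the tree of flats so that $\gamma_n$ and $\gamma$ diverge coarsely past this flat. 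The first condition would be achieved by exploiting that within a single Euclidean flat, many distinct initial directions share the same initial finite sets of crossed hyperplanes—possibly crossing them in different orders, as for the rays of slopes $1/2$ and $2$ from the origin in $\R^2$—while the second condition exploits the tree-like global structure to force metric separation.

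The main obstacle is the non-convergence in $\FQ$, which must rule out \emph{every} pair of quasi-geodesic representatives simultaneously. My plan is to invoke a Morse-lemma type rigidity: any quasi-geodesic representative of $\xi_n$ (respectively $\xi$) must lie in a bounded neighbourhood of $\gamma_n$ (respectively $\gamma$), the bound depending on the Morse gauge of the point and on the quasi-geodesic constants. Provided the Morse gauges of the $\xi_n$ are uniformly controlled as $n$ varies, the coarse divergence of $\gamma_n$ from $\gamma$ past the branching flat then prevents any two such quasi-geodesic representatives from fellow-travelling on arbitrarily long initial segments. The delicate combinatorial heart of the proof is the simultaneous satisfaction of all requirements: producing $\xi_n$ that are Morse with uniformly bounded gauge, whose geodesic representatives share longer and longer initial hyperplane data with $\gamma$, yet whose rays coarsely separate into distinct branches of the tree of flats. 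Arranging this coherently is where the construction will demand the most care.
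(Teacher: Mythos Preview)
Your plan has a genuine gap at its core: the mechanism you propose for non-$\FQ$-convergence is incompatible with the $\FG$-convergence you simultaneously require. You want the geodesic representatives $\gamma_n$ to diverge metrically from $\gamma$ past some branching flat, and then argue that with uniform Morse gauges all quasi-geodesic representatives of $\xi_n$ stay close to $\gamma_n$ and hence also fail to fellow-travel $\gamma$. But $\gamma_n$ is itself a $(1,0)$-quasi-geodesic representative of $\xi_n$, so the same reasoning would show that $\gamma_n$ does not fellow-travel $\gamma$---contradicting $\FG$-convergence. Theorem~\ref{thm:TheoremA} says precisely that sharing large initial hyperplane sets forces metric fellow-travelling of the geodesic representatives; you cannot have one without the other. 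In fact your uniform-gauge hypothesis is backwards: if the $\xi_n$ share a common Morse gauge $N$ and $\xi_n \to \xi$ in $\FG$, then every $(K,C)$-quasi-geodesic representative of $\xi_n$ is Hausdorff-close to $\gamma_n$ with bound depending only on $N,K,C$, and since $\gamma_n$ comes $\kappa(\rho,1,0)$-close to $\gamma$ at distance $R_n \to \infty$, Corollary~\ref{cor:Reformulation} forces that quasi-geodesic to fellow-travel $\gamma$ for distance tending to infinity as well. Uniform gauges \emph{yield} $\FQ$-convergence; the paper's construction works precisely because the approximating points have unbounded Morse gauges.

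Your choice of space also fails. In $\Z^2 * \Z$ every Morse geodesic from $o$ crosses infinitely many $c$-hyperplanes, and since $c$ commutes with nothing each such hyperplane is a single point whose removal disconnects $Y$; any quasi-geodesic representative of a point beyond such a hyperplane must pass through it and therefore meets $\gamma$ exactly there. This forces $\FQ$-fellow-travelling whenever there is $\FG$-fellow-travelling, so the tree of flats cannot separate the two topologies. The paper instead uses the Croke--Kleiner RAAG on a path of four vertices, where every hyperplane has an unbounded carrier. The argument does not attempt to control all quasi-geodesic representatives of the approximating points; rather, for each far-out hyperplane $k$ crossed by $\gamma$ it builds a single explicit $(8\sqrt{2},1)$-quasi-geodesic that crosses $k$ (hence lies in $U_{o,k}$) while winding through neighbouring flats so as to remain outside any fixed neighbourhood of $\gamma$. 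Exhibiting that one bad quasi-geodesic---not bounding all of them---is the heart of the proof.
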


The counter-example used to prove Theorem \ref{thm:TheoremB} is the same Right-angled Artin group that was already used by Croke and Kleiner to show that the visual boundary is not a quasi-isometry invariant for $\CAT$ spaces (see section \ref{sec:Comparison} for details). Our construction to prove that this is a counter-example shows in particular that no minor adjustments to Cashen and Mackays concept of fellow-traveling will change the inequality of $\FG$ and $\FQ$.\\

Combining Theorem \ref{thm:TheoremA} with results from \cite{CashenMackay} and \cite{BeyFio} (see section \ref{sec:VisualTopology} for details), we obtain 

\begin{cor} \label{cor:Corollary1}
Let $Y$ be a uniformly locally finite $\CAT$ cube complex. Then the following topologies on $\partial_M Y$ coincide:

\begin{enumerate}

\item The subspace topology induced by the visual topology

\item $\FG$

\item $\Hyp$

\item The topology induced by the Roller boundary.

\end{enumerate}

In particular one of them is invariant under quasi-isometries if and only if the others are.

\end{cor}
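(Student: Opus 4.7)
The plan is to establish a chain of equalities among the four listed topologies, with Theorem~\ref{thm:TheoremA} providing the central link $\FG = \Hyp$ on $\partial_M Y$. It then suffices to argue two further identifications: that the subspace topology of the visual topology agrees with $\FG$ on $\partial_M Y$, and that the topology induced by the Roller boundary agrees with $\Hyp$ (or equivalently with any other topology already in the chain) on $\partial_M Y$.

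For the first step I would invoke \cite{CashenMackay}. The topology $\FG$ is defined via uniform fellow-traveling of geodesic representatives based at $o$, and in a proper $\CAT$ space two such geodesic rays converge visually if and only if they fellow-travel on every compact subset. Cashen and Mackay make this comparison precise on the Morse boundary of a proper $\CAT$ space, which yields the identification of the visual subspace topology with $\FG$. For the second step I would appeal to \cite{BeyFio}, where the topology induced on $\partial_M Y$ by the Roller boundary is analysed. The key observation is that a Morse direction in a $\CAT$ cube complex is determined by the orientations it induces on hyperplanes, i.e.\ by the collection of hyperplanes crossed by its geodesic representative from $o$. The basic open sets $V_{o, h_1, \dots, h_n}$ defining $\Hyp$ are then precisely the cylinders of the resulting Roller topology, so the two topologies agree when restricted to $\partial_M Y$.

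Putting these pieces together with Theorem~\ref{thm:TheoremA} gives the desired coincidence of all four topologies, and the final assertion on quasi-isometry invariance is immediate since equal topologies share the same homeomorphism invariants. I anticipate that the bulk of the effort lies in correctly quoting \cite{BeyFio}, as one needs the precise statement relating the Roller topology to hyperplane crossings restricted to the Morse subset; the identifications coming from \cite{CashenMackay} and from Theorem~\ref{thm:TheoremA} are by contrast essentially bookkeeping once the definitions have been lined up.
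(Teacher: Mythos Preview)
Your proposal is correct and follows the same overall architecture as the paper: Theorem~\ref{thm:TheoremA} gives $\FG = \Hyp$, Proposition~7.3 of \cite{CashenMackay} gives visual $= \FG$, and \cite{BeyFio} supplies the link to the Roller boundary. The only difference is in how you route the last identification. You propose to match the Roller topology directly with $\Hyp$ by observing that cylinder sets correspond to the $V_{o,h_1,\dots,h_n}$; the paper instead quotes Theorem~3.10 of \cite{BeyFio}, which says that the quotient map $\Phi\colon \partial_{R,M}Y/{\sim}\to\partial_M Y$ is a homeomorphism onto the Morse boundary with the \emph{visual} topology, and then closes the loop via visual $=\FG=\Hyp$. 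The paper's route is a little cleaner because it avoids having to argue that the quotient topology on $\partial_{R,M}Y/{\sim}$ really is generated by images of cylinders (the fibres of $\Phi$ are nontrivial bounded components, and the $V_{o,h_1,\dots,h_n}$ are defined via $\CAT$ geodesic crossings rather than ultrafilter orientations), whereas your route would require exactly the kind of careful bookkeeping you anticipate. Either way, the substance is the same and the final claim about quasi-isometry invariance is immediate.
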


Note that the visual topology and $\Hyp$ do not coincide on all of $\partial_{\infty} Y$ in general; $\mathbb{R}^2$ with its standard cubulation provides an easy counter example.

In \cite{Cashen}, Cashen showed that the subspace topology induced by the visual topology is not invariant under quasi-isometries in general. Since his counter examples do not admit a cocompact action by isometries, he raised the question whether the Morse boundary with the visual topology is invariant under quasi-isometry when the spaces in question admit a cocompact action by isometries. Corollary \ref{cor:Corollary1} provides us with a new tool to study this question for cubulable groups.\\

One necessary condition for the quasi-isometry invariance of $\FG$ is that it is independent of the choice of base point. It turns out that $\FG$ (and thus the other three topologies in Corollary \ref{cor:Corollary1}) are independent of the choice of base point if the metric space $Y$ is $\CAT$. We provide a counter example for non-$\CAT$ spaces. In fact, our counter example will be a finitely generated small cancellation group.\\

Restricting our attention to a special class of $\CAT$ cube complexes (which includes the universal covering of the Salvetti complex of any RAAG) allows us to give an explicit construction of a metric on $\partial_M Y$ that induces the topology $\FG$. This metric depends on the choice of a base point. We show that the cross ratio induced by this metric also depends on the choice of a base point and we will compare these cross ratios to the one introduced in \cite{BeyFioMedici}.\\

The remainder of the paper is organised as follows. In section \ref{sec:Preliminaries}, we recall basic notions and facts about Morse boundaries, $\CAT$ cube complexes and RAAGs. We prove that for $\CAT$ spaces, $\FG$ is independent of the choice of base point and provide a counter example for non-$\CAT$ spaces. In section \ref{sec:Comparison}, we will prove Theorem \ref{thm:TheoremA} and Theorem \ref{thm:TheoremB}. In section \ref{sec:Metrizing}, we introduce a metric on $\partial_M Y$ that induces the topology $\FG$ for a special class of $\CAT$ cube complexes and compare this metric to notions introduced in \cite{BeyFioMedici}. In section \ref{sec:VisualTopology}, we will discuss Corollary \ref{cor:Corollary1} and finish with a short analysis of the question whether $\Hyp$ is invariant under quasi-isometries.\\

The author is grateful to Ruth Charney, Matthew Cordes, Dominik Gruber, Viktor Schroeder, Alessandro Sisto and Davide Spriano for many discussions and helpful advice. The author thanks Jonas Beyrer and Elia Fioravanti for pointing out a mistake in an earlier version of this paper. The author also thanks Christopher Cashen and John Mackay for their comments on Proposition \ref{prop:basepointindependence} and for providing Example \ref{exam:counterexample} and Matthew Cordes for providing figure \ref{fig:Torus}.




\section{Preliminaries} \label{sec:Preliminaries}

\subsection{The Morse boundary} \label{subsec:Morse boundary}

For a more thorough introduction to Morse boundaries and their properties, see \cite{Cordes}.

Let $(Y,d)$ be a proper, geodesic metric space. Two quasi-geodesic rays $\gamma$, $\gamma'$ are called asymptotically equivalent, if they have uniformly bounded distance, i.\,e. $d(\gamma(t), \gamma'(t)) \leq B < \infty$ for all $t \geq 0$ and some $B \in \mathbb{R}$. The visual boundary $\partial_{\infty} Y$ of $(Y,d)$ is defined to be the set of all equivalence classes of quasi-geodesic rays. Equivalently, one may fix a base point $o$ and consider equivalence classes of quasi-geodesic rays that start at $o$.

Given a subset $S \subset Y$ and $R \geq 0$, we denote the {\it $R$-neighbourhood} of $S$ by
\[ N_R(S) := \{ y \in Y | \exists s \in S : d(y,s) \leq R \}. \]

Let $N : \mathbb{R}_{\geq 1} \times \mathbb{R}_{\geq 0} \rightarrow \mathbb{R}_{\geq 0}$ be a function. A set $S \subset Y$ is called {\it $N$-Morse} if every $(K,C)$-quasi-geodesic $\gamma$ whose endpoints lie in $S$ is contained in the $N(K,C)$-neighbourhood of $S$, i.\,e. $\gamma \subset N_{N(K,C)}(S)$. We call a set Morse, if there exists a function $N$, such that the set is $N$-Morse. We define the {\it Morse boundary} $\partial_M Y \subset \partial_{\infty} Y$ to be the set of all points in the visual boundary that can be represented by a quasi-geodesic, whose image is Morse. Note that any set, that has bounded distance of an $N$-Morse set is $N'$-Morse for some function $N'$ which depends only on $N$ and the Hausdorff-distance between the two sets. In particular, all representatives of a point in the Morse boundary are Morse. Moreover, one can show that for any point $\xi \in \partial_M Y$ and any base point $o \in Y$, there exists some $N$, such that all geodesic representatives of $\xi$ are $N$-Morse.

There is an equivalent characterisation of the Morse-property. Let $\rho : \mathbb{R}_{\geq 0} \rightarrow \mathbb{R}_{\geq 0}$ be a non-decreasing, eventually non-negative function, which is sublinear, i.\,e. $\lim_{r \rightarrow \infty} \frac{\rho(r)}{r} = 0$. Consider a closed set $S \subset Y$ and $y \in Y$. We denote the set of closest points to $y$ in $S$ by $\pi_S(y)$ and call $\pi_S$ the {\it closest point projection onto $S$}, even though $\pi_S$ is, strictly speaking, not a map from $Y$ to $S$. A closed set $S \subset Y$ is called {\it $\rho$-contracting}, if for all $x, y \in Y \diagdown S$ with $d(x,y) < d(S, y)$, we have that $\diam(\pi_S(x), \pi_S(y)) \leq \rho(d(S,y))$. In other words, for any point $y \in Y \diagdown S$, the projection of the largest open ball $B$, centered at $y$ that does not intersect $S$, onto $S$ has diameter bounded by $\rho(r)$ where $r$ denotes the radius of $B$. We call a closed set contracting, if it is $\rho$-contracting for some non-decreasing, eventually non-negative, sublinear function $\rho$.

In \cite{ACGH}, the authors proved that for every function $N$, there exists a function $\rho$, depending only on $N$, such that any closed $N$-Morse set is $\rho$-contracting. Conversely, for every $\rho$ there exists an $N$, such that every closed $\rho$-contracting is $N$-Morse. Thus, we see that the contracting boundary, which is defined to be the set of all points in the visual boundary, that admit a $\rho$-contracting representative (and hence all representatives are contracting), is the same as the Morse boundary. If $Y$ is $\CAT$, a geodesic is Morse if and only if there exists a constant $D$ such that the geodesic is $D$-contracting (cf. \cite{BestvinaFujiwara, Sultan}). 

Note that so far, we have only introduced the Morse boundary as a set. We will now recall some constructions and results from \cite{CashenMackay} in order to define the topologies $\FG$ and $\FQ$. We begin with their key-observation on the divergence-behaviour of quasi-geodesics from contracting sets. Given a sublinear function $\rho$, $K \geq 1$ and $C \geq 0$, we define
\[ \kappa(\rho, K, C) := \max(3K^2, 3C, 1 + \inf\{ R \, | \, \forall r \geq R, \rho(r) \leq 3K^2 r \} )\]
and
\[ \kappa'(\rho, K, C) := (K^2 + 2) (2\kappa(\rho, K, C) + C). \]

\begin{prop}[Corollary 4.3 in \cite{CashenMackay}] \label{prop:CashenMackay}

Let $Z$ be $\rho$-contracting and let $\beta$ be a continuous $(K',C')$-quasi-geodesic ray with $d(\beta_0, Z) \leq \kappa(\rho, K', C')$. There are two possibilities:

\begin{enumerate}

\item[(1)] The set $\{ t | d(\beta_t, Z) \leq \kappa(\rho, K', C') \}$ is unbounded and $\beta$ is contained in the $\kappa'(\rho, K', C')$-neighbourhood of $Z$.

\item[(2)] There exists a last time $T_0$ such that $d(\beta_{T_0}, Z) = \kappa(\rho, K', C')$ and:

\[ \forall t, d(\beta_t, Z) \geq \frac{1}{2K'}(t - T_0) - 2(C' + \kappa(\rho, K', C')). \]

\end{enumerate}

\end{prop}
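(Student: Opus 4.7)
The proof strategy is to combine the quasi-geodesic inequality $d(\beta_{T_0}, \beta_t) \geq \tfrac{1}{K'}(t - T_0) - C'$ with a triangle inequality routed through the closest-point projection, after bounding how far $\pi_Z \circ \beta$ can move while $\beta$ stays far from $Z$. Write $\kappa$ for $\kappa(\rho, K', C')$ and $\kappa'$ for $\kappa'(\rho, K', C')$. The basic step-wise estimate is immediate from the definition of $\rho$-contracting: whenever $d(\beta_s, \beta_{s'}) < d(\beta_s, Z)$, we have $\diam(\pi_Z(\beta_s) \cup \pi_Z(\beta_{s'})) \leq \rho(d(\beta_s, Z))$. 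On any subarc where $d(\beta_\cdot, Z) \geq \kappa$, the definition of $\kappa$ guarantees $\rho(r) \leq 3K'^2 r$ along the subarc and $\kappa \geq \max(3K'^2, 3C')$, which allows a subdivision $s_0 < s_1 < \cdots < s_n$ of the parameter interval into steps on which the contracting hypothesis applies and each step contributes roughly $d(\beta_{s_i}, Z)/K'$ to the parameter length. Telescoping, the total projection displacement over $[T_0, t]$ is bounded by $\tfrac{1}{2K'}(t - T_0)$ plus additive constants depending only on $\kappa$ and $C'$.

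For Case (2), let $T_0$ be the last time $d(\beta_{T_0}, Z) = \kappa$, and for $t > T_0$ pick $p_0 \in \pi_Z(\beta_{T_0})$ and $p_t \in \pi_Z(\beta_t)$. The triangle inequality gives
\[ \tfrac{1}{K'}(t - T_0) - C' \;\leq\; d(\beta_{T_0}, \beta_t) \;\leq\; \kappa + d(p_0, p_t) + d(\beta_t, Z). \]
Substituting the projection-displacement bound from the previous paragraph then yields $d(\beta_t, Z) \geq \tfrac{1}{2K'}(t - T_0) - 2(C' + \kappa)$ after absorbing the residual constants.

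For Case (1), assume $A := \{t \,|\, d(\beta_t, Z) \leq \kappa\}$ is unbounded. Given $s \geq 0$, bracket $s$ by $s_1 \leq s \leq s_2$ with $d(\beta_{s_i}, Z) \leq \kappa$, where the lower bracket uses the hypothesis $d(\beta_0, Z) \leq \kappa$. To show $d(\beta_s, Z) \leq \kappa'$, suppose not and take the maximal subinterval $[u, v] \ni s$ on which $d(\beta_\cdot, Z) > \kappa$; then $d(\beta_u, Z), d(\beta_v, Z) \leq \kappa$ while the excursion achieves distance greater than $\kappa'$. Applying the step-wise projection estimate to $[u, v]$ bounds $d(p_u, p_v)$ linearly in $v - u$, whereas the fact that $\beta$ must travel from distance $\kappa$ up to distance $> \kappa'$ and back forces $v - u \geq 2K'^{-1}(\kappa' - \kappa) - 2C'$ via the quasi-geodesic inequality. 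The explicit value $\kappa' = (K'^2 + 2)(2\kappa + C')$ is calibrated so that the resulting lower bound on $d(p_u, p_v)$ contradicts the upper bound $d(p_u, p_v) \leq 2\kappa$ coming from both endpoints lying within $\kappa$ of $Z$. Hence $d(\beta_s, Z) \leq \kappa'$ for every $s$.

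The main obstacle is the careful bookkeeping of constants in the step-wise telescoping: in particular the factor $\tfrac{1}{2K'}$ (rather than the naive $\tfrac{1}{K'}$) in Case (2) arises precisely because the projection displacement may consume up to half of the quasi-geodesic parameter, which is forced by the inequality $\rho(r) \leq 3K'^2 r$ for $r \geq \kappa$. Choosing the subdivision step sizes so that the telescoping sum closes with the advertised slope, and matching the explicit constants $\kappa$ and $\kappa'$, is the most delicate part of the argument.
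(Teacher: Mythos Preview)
The paper does not prove this proposition at all: it is quoted verbatim as Corollary~4.3 of \cite{CashenMackay} and used as a black box throughout. There is therefore no proof in the paper to compare your sketch against; your outline is essentially a reconstruction of Cashen--Mackay's own argument, and Case~(2) follows their subdivision-and-telescoping method correctly.

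Your Case~(1), however, contains a genuine error. The sentence ``the upper bound $d(p_u, p_v) \leq 2\kappa$ coming from both endpoints lying within $\kappa$ of $Z$'' is false: knowing $d(\beta_u, Z) = d(\beta_v, Z) = \kappa$ only yields $\lvert d(p_u, p_v) - d(\beta_u, \beta_v)\rvert \leq 2\kappa$, not any absolute bound on $d(p_u, p_v)$. Moreover, combining your lower bound on $v-u$ with the projection estimate (which is an \emph{upper} bound on $d(p_u,p_v)$ in terms of $v-u$) cannot produce a \emph{lower} bound on $d(p_u,p_v)$, so the contradiction as you state it does not close. The correct assembly is: from the quasi-geodesic lower bound $d(\beta_u,\beta_v) \geq (v-u)/K' - C'$ and the triangle through $p_u, p_v$ one gets $d(p_u,p_v) \geq (v-u)/K' - C' - 2\kappa$; comparing this with the projection upper bound $d(p_u,p_v) \leq \tfrac{1}{2K'}(v-u) + \mathrm{const}$ forces an \emph{upper} bound on $v-u$, which then contradicts the excursion lower bound $v-u \geq 2(\kappa'-\kappa - C')/K'$ once $\kappa'$ is chosen as stated. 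You have all the pieces, but the direction of the final contradiction is reversed.
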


This motivates the following definition. Fix $o \in Y$. Let $\xi \in \partial_M Y$. Since $\xi$ is in the contracting boundary, there exists $\rho$ such that all geodesic representatives of $\xi$ based at $o$ are $\rho$-contracting. Choose one geodesic representative $\gamma \in [\xi]$ that is based at $o$. Let $R \geq 0$ We say that a $(K,C)$-quasi-geodesic $\beta$ fellow travels along $\gamma$ for distance $R$, if $\beta \cap N_{\kappa(\rho, K, C)}(\gamma \diagdown B_{R}(o)) \neq \emptyset$, where $B_{R}(o)$ denotes the open ball of radius $R$ centered at $o$. We define the set

\begin{equation*}
\begin{split}
U_{o, R}(\xi) := \{ \eta \in \partial_M Y |  &\text{ All quasi-geodesic representatives of $\eta$ that are}\\
&\text{ based at $o$ fellow-travel along $\gamma$ for distance $R$.} \}.
\end{split}
\end{equation*}

Analogously, we define

\begin{equation*}
\begin{split}
V_{o, R}(\xi) := \{ \eta \in \partial_M Y |  &\text{ All geodesic representatives of $\eta$ that are}\\
&\text{ based at $o$ fellow-travel along $\gamma$ for distance $R$.} \}.
\end{split}
\end{equation*}

Cashen and Mackay show that the two families $\{U_{o, R}(\xi)\}_{R, \xi}$ and $\{V_{o, R}(\xi)\}_{R, \xi}$ are neighbourhood bases of two topologies. We denote the topology induced by $\{U_{o, R}(\xi) \}_{R, \xi}$ by $\FQ$ and the topology induced by $\{V_{o, R}(\xi)\}_{R, \xi}$ by $\FG$. In other words, a set $U$ is open in $\FQ$ if and only if for every point $\xi \in U$ there exists some $R > 0$, such that $U_{o, R}(\xi) \subset U$. The analogous statement defines $\FG$.

\begin{prop} \label{prop:basepointindependence}
$\FQ$ is independent of the choice of the base point $o$. If $Y$ is $\CAT$, then $\FG$ is also independent of the choice of $o$.
\end{prop}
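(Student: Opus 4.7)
The plan is to fix two base points $o, o'$ with $D := d(o,o')$, and reduce the proposition to showing that for every $\xi \in \partial_M Y$ and every $R \geq 0$, there exists $R' \geq 0$ such that $U_{o',R'}(\xi) \subseteq U_{o,R}(\xi)$ (for $\FQ$) and $V_{o',R'}(\xi) \subseteq V_{o,R}(\xi)$ (for $\FG$ in the $\CAT$ case). By symmetry, swapping $o$ and $o'$ yields the reverse inclusion. Throughout, I would fix a sublinear $\rho$ controlling the contracting behaviour of all geodesic representatives of $\xi$ from $o$, and exploit the stability-under-perturbation statement from \cite{ACGH} to get a function $\rho'$ (depending only on $\rho$ and $D$) that controls the geodesic representatives of $\xi$ based at $o'$.

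For the $\FQ$ statement, the main input is that a $(K,C)$-quasi-geodesic ray $\beta$ based at $o'$ can be concatenated with a geodesic segment $[o,o']$ to yield a $(K, C+2D)$-quasi-geodesic ray $\tilde{\beta}$ based at $o$, and conversely. This gives a bijection between quasi-geodesic representatives of any $\eta \in \partial_M Y$ based at $o$ and those based at $o'$, with a controlled change in constants. Applying Proposition \ref{prop:CashenMackay} to $\gamma'$ (the geodesic rep of $\xi$ from $o'$) viewed as a quasi-geodesic whose basepoint is near $o$, rules out possibility (2) and puts $\gamma'$ uniformly close to $\gamma$ outside a bounded region depending on $\rho$ and $D$. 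Hence, if $\tilde\beta$ fellow-travels $\gamma$ for a sufficiently large distance $R$ from $o$, then $\beta$ meets $\gamma'$ within scale $\kappa(\rho', K, C)$ at distance at least $R - D - (\text{const})$ from $o'$. Choosing $R'$ larger than $R$ by the additive constants produced in this comparison yields the required $R'$.

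For $\FG$ under the $\CAT$ assumption, geodesic rays from a fixed point to a boundary point are unique, so the modified rays above are in general only quasi-geodesics and do not automatically lie in the set of \emph{geodesic} representatives — this is precisely where the non-$\CAT$ counter-example breaks things. The rescue is convexity of the distance function in $\CAT$: if $\beta, \beta''$ are the unique geodesic rays from $o$ and $o'$ representing a boundary point $\eta$, then $t \mapsto d(\beta(t), \beta''(t))$ is convex and bounded by $D$, hence non-increasing, and an analogous bound holds between $\gamma$ and $\gamma'$. Consequently, a bijective correspondence between geodesic representatives from $o$ and from $o'$ is given by the map $\eta \mapsto \beta''$ (and similarly $\xi \mapsto \gamma'$), with uniformly bounded Hausdorff distance between matched pairs. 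Fellow-traveling the shifted base-ray $\gamma'$ for distance $R'$ from $o'$ then forces fellow-traveling of $\gamma$ for distance roughly $R' - 2D$ from $o$, after absorbing the change from $\kappa(\rho,K,C)$ to $\kappa(\rho',K,C)$ into a single additive constant.

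The main obstacle I expect is purely bookkeeping: carefully tracking how the contracting scale $\kappa(\cdot,K,C)$ transforms under (i) the change of the controlling sublinear function $\rho \leadsto \rho'$ induced by the bounded Hausdorff perturbation between base points and (ii) the change of the quasi-geodesic constants $C \leadsto C + 2D$ coming from prepending $[o,o']$. Both modifications are uniform in the quasi-geodesic constants $(K,C)$ (they depend only on $\rho$ and $D$), which is what makes it possible to choose a single $R'$ that works simultaneously for every quasi-geodesic representative of every $\eta$ in the neighbourhood, and hence to conclude $U_{o',R'}(\xi) \subseteq U_{o,R}(\xi)$ respectively $V_{o',R'}(\xi) \subseteq V_{o,R}(\xi)$.
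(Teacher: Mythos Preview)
Your proposal is correct and takes essentially the same approach as the paper: for $\FQ$ the paper simply cites \cite{CashenMackay}, and for $\FG$ in the $\CAT$ case both you and the paper use the convexity of the distance function to bound $d(\beta(t),\beta'(t))$ by $d(o,o')$ for geodesic representatives based at the two points, then transfer fellow-travelling across the uniformly bounded Hausdorff gap. The only place you are slightly less explicit than the paper is the step where ``comes $J$-close to $\gamma$ at some far-out point'' is upgraded to ``comes $\kappa(\rho,1,0)$-close to $\gamma$ at a somewhat less far-out point'': the paper packages this as a corollary of Lemma~4.6 of \cite{CashenMackay} and invokes it with the explicit $J = \kappa'(\rho,1,d(o,o')) + \kappa(\rho,1,0) + d(o,o')$, whereas you gesture at it as ``absorbing into a single additive constant'' --- but this is exactly the right lemma and the right use of it.
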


While the first part of this proposition was proven in \cite{CashenMackay}, we need to provide a proof for the second part. Before we do so, we state a key technical lemma from \cite{CashenMackay}.

\begin{lem}[Lemma 4.6 of \cite{CashenMackay}]
Let $\alpha$ be a $\rho$-contracting geodesic ray and let $\beta$ be a continuous $(K,C)$-quasi-geodesic ray with $\alpha(0) = \beta(0) = o$. Given some $R$ and $J$, suppose there exists a point $x \in \alpha$ with $d(x,o) \geq R$ and $d(x, \beta) \leq J$. Let $y$ be the last point on the subsegment of $\alpha$ between $o$ and $x$ such that $d(y, \beta) = \kappa(\rho, K, C)$. There is a constant $M \leq 2$ and a function $\lambda(\phi, p, q)$ defined for sublinear $\phi$, $p \geq 1$ and $q \geq 0$ such that $\lambda$ is monotonically increasing in $p$ and $q$ and

\[ d(x,y) \leq MJ + \lambda(\rho, K, C). \]

Thus,

\[ d(o,y) \geq R - MJ - \lambda(\rho, K, C). \]

\end{lem}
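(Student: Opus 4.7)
The strategy is to isolate the $J$-dependence by a triangle inequality routed through the closest-point projection onto $\alpha$. Choose $y' = \beta(s_y)$ realizing $d(y, \beta) = \kappa$, writing $\kappa := \kappa(\rho, K, C)$, and $x' = \beta(s_x)$ with $d(x, x') \leq J$. Let $p$ be a point of $\alpha$ closest to $x'$ and $q$ a point of $\alpha$ closest to $y'$. Since $x, y \in \alpha$, the closest-point property gives $d(x', p) \leq d(x', x) \leq J$ and $d(y', q) \leq d(y', y) \leq \kappa$. Iterating the triangle inequality then yields
\[
  d(x, y) \leq d(x, x') + d(x', p) + d(p, q) + d(q, y') + d(y', y) \leq 2J + d(p, q) + 2\kappa,
\]
which already exhibits the coefficient $M \leq 2$. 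It remains to bound $d(p, q)$ by a quantity $\lambda_0(\rho, K, C)$ independent of $J$ and $R$; then the claim will hold with $\lambda(\rho, K, C) := \lambda_0(\rho, K, C) + 2\kappa(\rho, K, C)$.

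To control $d(p, q)$, I would apply Proposition \ref{prop:CashenMackay} to $\beta$ with respect to the $\rho$-contracting ray $\alpha$; the hypothesis is met because $\beta(0) = o \in \alpha$. In Case 1 of that proposition, $\beta$ is contained in the $\kappa'(\rho, K, C)$-neighbourhood of $\alpha$ throughout. In Case 2, there is a last time $T_0$ with $d(\beta(T_0), \alpha) = \kappa$ and linear divergence $d(\beta(t), \alpha) \geq \tfrac{1}{2K}(t - T_0) - 2(C + \kappa)$ thereafter, which together with the quasi-geodesic speed bound gives $d(\beta(t), \alpha) \leq K(t - T_0) + C + \kappa$ for $t \geq T_0$ and $d(\beta(t), \alpha) \leq \kappa$ for $t \leq T_0$ (the latter by the definition of $T_0$). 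Either way, the subsegment of $\beta$ between times $s_y$ and $s_x$ is confined to a controlled tube around $\alpha$. I would then bound $d(p, q)$ by subdividing this subsegment into pieces whose length is short compared to the local distance of $\beta$ to $\alpha$, applying the $\rho$-contracting definition to each piece, and summing the resulting projection displacements. The sublinearity of $\rho$, combined with the Case 1 or Case 2 growth estimate on $d(\beta(t), \alpha)$, makes the total bounded by a function of $\rho, K, C$ alone. The maximality of $y$ enters to prevent the projection from wandering back past $y$ on $\alpha$, since such an oscillation would produce a later point of $\alpha|_{[o, x]}$ at distance $\kappa$ from $\beta$, contradicting the choice of $y$.

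The main obstacle is implementing the iterated-contraction estimate so that the summation is manifestly uniform in $J$ and $R$: the subdivision has to respect the hypothesis $d(a, b) < d(b, \alpha)$ of the $\rho$-contracting definition at every step, and the resulting sum has to be controlled via the sublinearity of $\rho$ against the explicit growth rates coming from Proposition \ref{prop:CashenMackay}. Monotonicity of $\lambda(\rho, K, C)$ in $K$ and $C$ will then follow from the corresponding monotonicity of $\kappa(\rho, K, C)$ and $\kappa'(\rho, K, C)$.
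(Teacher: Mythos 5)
First, a remark on the comparison itself: the paper does not prove this statement. It is quoted verbatim as Lemma 4.6 of Cashen--Mackay and used as a black box, so there is no internal proof to measure your argument against; I am assessing your proposal on its own merits.

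Your argument has a genuine gap, and it sits exactly where you place the remaining work. Since $q=\pi_\alpha(y')$ satisfies $d(q,y)\le d(q,y')+d(y',y)\le 2\kappa$ and $p=\pi_\alpha(x')$ satisfies $d(p,x)\le 2J$, a bound $d(p,q)\le\lambda_0(\rho,K,C)$ would immediately yield $d(x,y)\le 2J+2\kappa+\lambda_0$; so Step 2 is not a reduction but a restatement of the lemma in an equivalent (in fact stronger) form, and all the difficulty is deferred to it. Worse, with $x'$ merely \emph{some} point of $\beta$ with $d(x,x')\le J$ (which is all you assume), the claim $d(p,q)\le\lambda_0(\rho,K,C)$ is false: let $\beta$ fellow-travel $\alpha$ within $\epsilon$ up to $\alpha(s_1)$ and then leave along a perpendicular geodesic, take $x$ just beyond $y\approx\alpha(s_1+\kappa)$, and take $J$ large; then $x'$ may be chosen on the fellow-travelling portion of $\beta$ nearly $J$ back towards $o$, giving $d(p,q)\sim J$. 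Even after fixing $x'$ to be a closest point of $\beta$ to $x$, the mechanism you propose for bounding $d(p,q)$ --- subdividing $\beta$ between $s_y$ and $s_x$ into pieces shorter than their distance to $\alpha$ and summing $\rho$-contraction estimates --- collapses precisely in the main case. When that subsegment lies in the $\kappa'(\rho,K,C)$-neighbourhood of $\alpha$ (Case 1 of Proposition \ref{prop:CashenMackay}, or the stretches before $T_0$ in Case 2), each piece has length at most about $\kappa'$ while contributing up to $\rho(\kappa')$ to the projection, so the sum is proportional to the length of the subsegment, which is itself of order $d(x,y)$ --- circular, and not uniform in $J$ or $R$. (A smaller error: ``$d(\beta(t),\alpha)\le\kappa$ for $t\le T_0$ by the definition of $T_0$'' does not follow; $T_0$ being the \emph{last} time at distance $\kappa$ does not exclude earlier excursions above $\kappa$.) The missing idea is to run the divergence estimate in the other direction: from $d(x,\beta)\le J$ and the linear lower bound in Case 2 one bounds the time $\beta$ can spend after its last $\kappa$-approach to $\alpha$ before reaching $x'$, and one must then argue, using the maximality of $y$, an intermediate-value argument on $s\mapsto d(\alpha(s),\beta)$, and the fact that $\kappa$ is large enough that a $(K,C)$-quasi-geodesic cannot circumvent the ball of radius $\kappa$ about $y$, that this last approach point lies near $y$ on $\alpha$. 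Your sketch gestures at the right ingredients but assembles them into a claim that cannot be made uniform in $J$.
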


We reformulate this Lemma in a way that will be a bit more handy to use.

\begin{cor} \label{cor:Reformulation}
Let $\alpha$ be a $\rho$-contracting geodesic ray based at $o$, $K \geq 1$, $C \geq 0$, $J \geq 0$, $r \geq 0$. Then, there exists a constant $R(\rho, K, C, r, J)$ such that all $(K,C)$-quasi-geodesic rays based at $o$ satisfying $(*)$ are fellow-traveling along $\alpha$ for at least distance $r$.

\begin{enumerate}
\item[$(*)$] There exists a point $p \in \alpha$ satisfying $d(o,p) \geq R(\rho, K, C, r, J)$ and $d(\beta, p) \leq J$.
\end{enumerate}
\end{cor}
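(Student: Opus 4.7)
The plan is to deduce the corollary directly from the preceding lemma by choosing $R(\rho, K, C, r, J)$ large enough that the conclusion $d(o,y) \geq R - MJ - \lambda(\rho, K, C)$ of the lemma forces $y$ to lie outside $B_r(o)$. Concretely, I would set
\[ R(\rho, K, C, r, J) := r + MJ + \lambda(\rho, K, C), \]
where $M$ and $\lambda$ are as in the lemma, and verify this value works.

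Fix a $(K,C)$-quasi-geodesic ray $\beta$ based at $o$ satisfying $(*)$, so there is some $p \in \alpha$ with $d(o,p) \geq R$ and $d(\beta, p) \leq J$. I would split into two cases according to whether $d(p,\beta)$ exceeds $\kappa := \kappa(\rho, K, C)$. If $d(p, \beta) \leq \kappa$, then since $\beta$ is a continuous quasi-geodesic in the proper space $Y$ its image is closed, so there exists $z \in \beta$ with $d(z,p) \leq \kappa$. Combined with $p \in \alpha \setminus B_r(o)$ (which holds since $d(o,p) \geq R \geq r$), this yields $z \in \beta \cap N_\kappa(\alpha \setminus B_r(o))$, which is precisely fellow-traveling for distance $r$, so no appeal to the lemma is needed in this case.

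If instead $d(p,\beta) > \kappa$, I would invoke the lemma with $x := p$. Because $d(o,\beta) = 0 < \kappa < d(p,\beta)$ and the function $t \mapsto d(\alpha(t), \beta)$ is continuous on the subsegment of $\alpha$ from $o$ to $p$, a point $y$ with $d(y,\beta) = \kappa$ does exist there, and the lemma gives
\[ d(o,y) \geq d(o,p) - MJ - \lambda(\rho, K, C) \geq R - MJ - \lambda(\rho, K, C) = r. \]
Therefore $y \in \alpha \setminus B_r(o)$, and by the same closedness argument as before there is $z \in \beta$ with $d(z,y) = \kappa$, so $z \in \beta \cap N_\kappa(\alpha \setminus B_r(o))$.

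The main subtlety — and the only real obstacle — is that the lemma implicitly assumes the auxiliary point $y$ with $d(y,\beta) = \kappa$ exists on the subsegment $[o,p] \subset \alpha$, which fails precisely when $d(p,\beta) \leq \kappa$. The two-case split above handles this cleanly: in the degenerate case the conclusion follows without needing the lemma, while in the non-degenerate case a continuity argument guarantees the existence of $y$ and lets the lemma deliver the required lower bound on $d(o,y)$. Beyond this bookkeeping, the argument is purely a rearrangement of the lemma's conclusion.
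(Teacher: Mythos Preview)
Your proof is correct and is exactly the argument the paper intends: the corollary is stated without proof as a direct ``reformulation'' of the preceding lemma, and your choice $R = r + MJ + \lambda(\rho,K,C)$ is precisely what the inequality $d(o,y) \geq R - MJ - \lambda(\rho,K,C)$ forces. Your two-case split according to whether $d(p,\beta) \leq \kappa$ is a careful handling of the edge case (the lemma's ``last point $y$'' need not exist when the whole subsegment stays $\kappa$-close to $\beta$) that the paper leaves implicit; it is the right way to make the reformulation rigorous.
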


Note that condition $(*)$ can be thought of as a version of fellow-traveling, where the quasi-geodesic $\beta$ is $J$-fellow-traveling along the geodesic $\alpha$ for distance $R = R(\rho, K, C, r, J)$.

\begin{proof}[Proof of Proposition \ref{prop:basepointindependence}]
For $\FQ$, this has been proven in \cite{CashenMackay}. Suppose that $Y$ is $\CAT$. Let $o$, $o' \in Y$, $\xi \in \partial_M Y$. There exists a sublinear function $\rho$, such that all geodesic representatives of $\xi$ based at $o$ or $o'$ are $\rho$-contracting. To distinguish the neighbourhood-bases with respect to $o$ and $o'$, we will denote the neighbourhoods by $U_{o, R}(\xi)$ and $U_{o', R}(\xi)$ respectively. Let $\alpha \in \xi$ be a geodesic based at $o$ and $\alpha' \in \xi$ a geodesic based at $o'$.

Let $R \geq 0$. We need to find $R'$ such that $U_{o', R'}(\xi) \subset U_{o, R}(\xi)$. Put

\[ R' := R\left(\rho, 1, 0, R, [ \kappa'(\rho, 1, d(o,o')) + \kappa(\rho, 1, 0) + d(o,o') ] \right) \; + \, \kappa'(\rho, 1, d(o,o')) + d(o,o')\]
\\
using the function $R(\cdot, \cdot, \cdot, \cdot, \cdot)$ from Corollary \ref{cor:Reformulation}. We claim that $U_{o', R'}(\xi) \subset U_{o, R}(\xi)$.

Let $\eta \in U_{o', R'}(\xi)$, $\beta \in \eta$ a geodesic representative based at $o$ and $\beta' \in \eta$ a geodesic representative based at $o'$. We know that $\beta' \cap N_{\kappa(\rho, 1, 0)}(\alpha' \diagdown B_{R'}(o)) \neq \emptyset$ and thus, there exist points $p' \in \alpha' \diagdown B_{R'}(o)$ and $q' \in \beta'$ such that $d(p',q') \leq \kappa(\rho, 1, 0)$. Since $Y$ is $\CAT$ and $\beta$ and $\beta'$ are asymptotically equivalent, the distance function $d(\beta(t), \beta'(t))$ is convex and bounded. We conclude that it is bounded by $d(o,o')$. In particular, we find $q \in \beta$ with $d(o, q) = d(o', q')$ and $d(q, q') \leq d(o,o')$. Consider a geodesic $\delta$ from $o$ to $o'$. The concatenation $\delta * \alpha'$ is a $(1, d(o,o'))$-quasi-geodesic representative of $\xi$, based at $o$. By Corollary 4.3 in \cite{CashenMackay}, every point in $\delta * \alpha'$ is $\kappa'(\rho, 1, d(o,o'))$-close to $\alpha$. In particular, there exists a point $p \in \alpha$ such that $d(p, p') \leq \kappa'(\rho, 1, d(o,o'))$ and

\begin{equation*}
\begin{split}
d(o, p) & \geq d(o', p') - \kappa'(\rho, 1, d(o,o')) - d(o, o')\\
& \geq R(\rho, 1, 0, R, \kappa'(\rho, 1, d(o,o')) + \kappa(\rho, 1, 0) + d(o,o')).
\end{split}
\end{equation*}

Overall, we find that $d(p,q) \leq \kappa'(\rho, 1, d(o,o')) + \kappa(\rho, 1, 0) + d(o,o')$ and $d(o, p) \geq R(\rho, 1, 0, R, \kappa'(\rho, 1, d(o,o')) + \kappa(\rho, 1, 0) + d(o,o'))$. Using Corollary \ref{cor:Reformulation} with $J = \kappa'(\rho, 1, d(o,o')) + \kappa(\rho, 1, 0) + d(o,o')$, we conclude that $\beta$ is fellow-traveling along $\alpha$ for distance at least $R$. Since $\beta$ was an arbitrary geodesic representative of $\eta \in U_{o',R'}(\xi)$ based at $o$, we conclude that $U_{o',R'}(\xi) \subset U_{o, R}(\xi)$. By symmetry, this implies that both neighbourhood-bases induce the same topology $\FG$.

\end{proof}

\begin{rem}
Note that the assumption that $Y$ is $\CAT$ is only used to find an upper bound for $d_{Haus}(\beta, \beta')$ that does not depend on the contracting functions of $\beta$ and $\beta'$. This is a weaker assumption than being $\CAT$. For example, it is sufficient, if the distance function $d(\beta(t), \beta'(t))$ is convex for any two geodesics $\beta$, $\beta'$ in $Y$. Spaces with this convexity property are also called \emph{Busemann spaces}. Since we will focus on $\CAT$ spaces in the rest of the paper, we will not discuss this more general situation.
\end{rem}

Note that $\FG$ is not base point invariant in general. What follows is a counter example.

\begin{exam} \label{exam:counterexample}
Let $R$ be a geodesic ray (i.\,e.\,a copy of $[0, \infty)$). For $i \geq 1$, attach geodesic rays $R_i$ to the points $i \in R$. We will distinguish between the real number $x$ in $R$ and the same real number in $R_i$ by writing $R(x)$ and $R_i(x)$ respectively. Further, we denote $o := R(0)$. Let $f$ be a superlinear function, i.\,e. $\lim_{r \rightarrow \infty} \frac{r}{f(r)} = 0$, such that $f$ is injective and $f(i) > i$ for all $i \in \mathbb{N}$. Glue intervals of length $1 + i + f(i)$ into our space by attaching their endpoints to $o$ and $R_i(f(i))$. Denote these intervals by $S_i$. For $0 \leq i \leq j$, denote the interval of length $i$ in $S_j$ that start at $o$ by $I_{i,j}$. For $i$ fixed, we glue together $S_{i,j}$ for all $j \geq i$. We denote this space by $Y$ and denote the point in all $S_i$ of distance $1$ from $o$ by $o'$ (all these points have been identified by the gluing). Since $f$ is superlinear, $R$ is a $\rho$-contracting geodesic ray in $Y$ for some sublinear function $\rho$. Further, all $R_i$ are contracting as well. We write $\kappa := \kappa(\rho, 1, 0)$.

Consider the neighbourhoods of $[R] \in \partial_M Y$ in $\FG$ with respect to $o$ and $o'$. With respect to $o$, the (unique) geodesic representative of $[R_i]$ stays $\kappa$-close to $R$ for $i + \kappa$. Thus, $U_{o, r}([R])$ contains infinitely many $[R_i]$.

With respect to $o'$, however, the geodesic representative of $[R_i]$ stays $\kappa$-close to $R$ only for distance $\kappa$. Thus, $U_{o, r}([R]) = \{ [R] \}$ for $r > \kappa$. We conclude that $\FG$ is not base point independent.\\

One may ask whether $\FG$ is base point invariant when the space $Y$ admits a cocompact group action by isometries. It turns out that this is not the case either. The geometry displayed above can be embedded into a finitely generated, infinitely presented small cancellation group. This is done as follows: The space $Y$ above can be equipped with the structure of a graph all whose edges have length $1$. From now on, we consider $Y$ equipped with this graph structure. We can label the edges of this graph with letters of the alphabet $S := \{ a, b_1, b_2, b_3, b_4, b_5, b_6, c, d_1, d_2, d_3, d_4, d_5, d_6 \}$ such that:

\begin{enumerate}

\item the geodesic ray $R$ spells the word $a^{\infty}$,

\item the geodesic ray $R_i$ spells $b_1^{f(i)} b_2^{f(i)} b_3^{f(i)} b_4^{f(i)} b_5^{f(i)} b_6^{f(i)} c^{\infty}$,

\item the segment $S_i$ starting at $o$ spells $c^{i} b_1 d_6^{f(i)} d_5^{f(i)} d_4^{f(i)} d_3^{f(i)} d_2^{f(i)} d_1^{f(i)}$.

\end{enumerate}

This labeling allows us to embed the graph $Y$ into the Cayley-graph Cay$(G, S)$ of the group 

\begin{equation*}
\begin{split}
G := < & a, b_1, b_2, b_3, b_4, b_5, b_6, c, d_1, d_2, d_3, d_4, d_5, d_6\\
& \vert i \in \mathbb{N}, a^{i} b_1^{f(i)} b_2^{f(i)} b_3^{f(i)} b_4^{f(i)} b_5^{f(i)} b_6^{f(i)} d_1^{-f(i)} d_2^{-f(i)} d_3^{-f(i)} d_4^{-f(i)} d_5^{-f(i)} d_6^{-f(i)} b_1^{-1} c^{-i} >
\end{split}
\end{equation*}

generated by the labeled graph $Y$.

It is easy to see that $Y$ satisfies $Gr'(\frac{1}{6})$ (cf. \cite{Gruber}), because $f$ is injective and $f(i) > i$, and $G$ is a small cancellation group. By work of Ollivier and Gruber in \cite{Ollivier} and \cite{Gruber}, the graph $Y$ embeds isometrically into Cay$(G, S)$ (Theorem 5.10 in \cite{Gruber}). We conclude that the geodesic rays in the example above remain geodesic after being embedded into Cay$(G, S)$. Furthermore, by Theorem 4.1 of \cite{ArzhantsevaCashenGruberHume}, the geodesic rays $R$ and $R_i$ are contracting in Cay$(G,S)$ (because $f$ is superlinear). We conclude that $\FG$ on $\partial_M \mathrm{Cay}(G, S)$ exhibits the same dependence on base points as $Y$.

\end{exam}




\subsection{$\CAT$ cube complexes} \label{subsec:CCCs}

For an in-depth introduction to $\CAT$ cube complexes, see \cite{Sageev}. We will focus on fixing notation and recalling some definitions and facts that will be used in the remainder of the paper.

Let $Y$ be a simply connected cube complex satisfying Gromov's no-$\triangle$-condition; see 4.2.C in \cite{Gromov-HypGps} and Chapter~II.5 in \cite{BH}. Unless specified otherwise, all points $v\in Y$ are implicitly understood to be vertices and all subsets of $Y$ are contained in the 0-skeleton; this applies in particular to edges and cubes. The \emph{link} of $v$ (denoted $\lk(v)$) is the simplicial complex consisting of an $(n-1)$-simplex for every $n$-cube of $Y$ based at $v$, with the same face relations. We denote by $\deg(v)$ the number of edges of $Y$ that contain $v$. 

The Euclidean metrics on the cubes of $Y$ fit together to yield a $\CAT$ metric on $Y$. We can however also endow each cube $[0,1]^k\cu Y$ with the restriction of the $\ell^1$ metric of $\R^k$ and consider the induced path metric $d_{\ell^1}(-,-)$. We refer to $d$ as the \emph{combinatorial metric} (or \emph{$\ell^1$ metric}). In finite dimensional cube complexes, the $\CAT$ and combinatorial metrics are bi-Lipschitz equivalent and complete. In particular, if all cubes in a cube complex have dimension $\leq n$, then the $\CAT$ and combinatorial metrics are $\sqrt{n}$-bi-Lipschitz equivalent.

The combinatorial metric allows us to introduce \emph{combinatorial geodesics}, which are geodesics between vertices of $Y$ with respect to the combinatorial metric that are fully contained in the $1$-skeleton of $Y$. If all cubes in $Y$ have dimension at most $n$, every combinatorial geodesic is a $(\sqrt{n}, 0)$-quasi-geodesic. Combinatorial geodesics are fully determined by the sequence of hyperplanes they cross.

We will refer to simply connected cube complexes satisfying Gromov's no-$\triangle$-condition as \emph{$\CAT$ cube complexes}, regardless of whether we consider it equipped with the $\CAT$ metric or the combinatorial metric. In order to distinguish geodesics in the $\CAT$ metric from the $\ell^1$-metric, we will call the former geodesics, while the latter will only appear in the form of the combinatorial geodesics introduced above.

We call a $\CAT$ cube complex $Y$ \emph{locally finite}, if for every vertex $v \in Y$, there are at most finitely many edges incident to $v$. We say $Y$ is \emph{uniformly locally finite} if there exists a constant $\nu$, such that for every vertex $v \in Y$, there are at most $\nu$ many edges incident to $v$. We say that $\nu$ is an upper bound for the valence of all vertices in $Y$. We say that a $\CAT$ cube complex has \emph{uniformly bounded dimension}, if there exists a constant $B$ such that every cube in $Y$ has dimension at most $B$.

Let $\mathcal{W}(Y)$ and $\mathcal{H}(Y)$ be the set of all hyperplanes and of all halfspaces of $Y$ respectively. Given a halfspace $s$, denote the other halfspace bounded by the same hyperplane by $s^*$. Given a hyperplane $h$, we call a choice of halfspace bounded by $h$ an \emph{orientation of $h$}. We sometimes denote the two orientations of $h$ by $\{ h^{+}, h^{-} \}$. The set $\mathcal{H}$ is endowed with the order relation given by inclusions; the involution $*$ is order reversing. The triple $(\mathcal{H},\cu,*)$ is thus a \emph{pocset} (see \cite{Sageev}).

Two hyperplanes are called \emph{transverse} if they intersect. Note that every intersection $h_1\cap \dots \cap h_k$ of pairwise transverse hyperplanes $h_1, \dots, h_k$ inherits a $\CAT$ cube complex structure. Its cells are precisely the intersections $h_1\cap \dots \cap h_k\cap c$ for any cube $c\cu Y$. Alternatively, $h_1\cap \dots \cap h_k$ can be viewed as a subcomplex of the cubical subdivision of $Y$.

Analogously, two halfspaces $s$, $s' \in \mathcal{H}$ are called \emph{transverse}, if their bounding halfspaces are transverse. Equivalently, they are transverse if and only if the four intersections $s \cap s'$, $s \cap s'^*$, $s^* \cap s'$, $s^* \cap s'^*$ are non-empty. A hyperplane is \emph{transverse} to a halfspace $s$ if it is transverse to the hyperplane that bounds $s$.

Given two subsets $U$, $V \subset Y$, we define $\mathcal{W}(U, V)$ to be the set of all hyperplanes that separate $U$ from $V$. Given a (combinatorial) geodesic $\gamma$, we write $\mathcal{W}(\gamma)$ for the set of hyperplanes crossed by $\gamma$.

Given a path $\gamma$ in $Y$, the length of $\gamma$ with respect to either metric can be estimated from below by $\# \mathcal{W}(\gamma) - 1$.

Let $e$ be an edge in $Y$. We denote the hyperplane crossed by $e$ by $h(e)$. We say that a hyperplane $h$ is \emph{adjacent} to a vertex $v \in Y$ if $h = h(e)$ for an edge incident to $v$.

We say that $Y$ has \emph{no free vertices} if for all vertices $v$ and all edges $e$ incident to $v$, there exists an edge $e'$ incident to $v$, such that $h(e) \cap h(e') = \emptyset$.

Suppose, $Y$ has no free vertices. Given an oriented edge $e$ we can extend it to a geodesic segment in $Y$ as follows: The orientation provides us with an endpoint $v$ of $e$. Choose an edge $e'$ incident to $v$ such that $h(e') \cap h(e) = \emptyset$. The concatenation $e * e'$, where we interpret $e$ and $e'$ as paths with arc-length parametrization, is a geodesic in $Y$. We say that we \emph{extend $e$ by $e'$}. Given a concatenation of edges $e_1 * \dots * e_n$ such that $h(e_i) \cap h(e_j) = \emptyset$ for all $i \neq j$, we can extend it to to a geodesic of length $n+1$ by adding an edge $e_{n+1}$ incident to the endpoint of $e_1 * \dots * e_n$ that does not intersect $h(e_i)$ for any $1 \leq i \leq n$. This way, we can extend any oriented edge to a geodesic of length $n+1$ by choosing a sequence of edges $e_1, \dots, e_n$, which are mutually disjoint and do not intersect with $e$. We say that we extend the oriented edge (or path) $e$ by $n$ many steps to the geodesic $e * e_1 * \dots * e_n$. Note that -- in general -- we may have several choices to extend $e$ by $n$ many steps. If $e$ is contained in a flat $F$, i.\,e. an embedding of $\mathbb{R}^2$ with standard cubulation that respects the cube complex structure, we have a unique way to extend the oriented edge $e$ by $n$ many steps inside the flat $F$ (since there is always a unique edge incident to the endpoint of the path that does not intersect any of the preceding hyperplanes). We say that we \emph{extend $e$ by $n$ many steps in the flat $F$}.

Given $y\in Y$, we denote by $\sigma_y \cu \mathcal{H}$ the set of all halfspaces containing the point $y$. It satisfies the following properties:
\begin{enumerate}
\item given any two halfspaces $s$, $s' \in \sigma_y$, we have $s \cap s' \neq \emptyset$;
\item for any hyperplane $h \in \mathcal{W}$, a side of $h$ lies in $\sigma_y$;
\item every descending chain of halfspaces in $\sigma_y$ is finite. 
\end{enumerate}
Subsets $\sigma \cu \mathcal{H}$ satisfying $(1)$--$(3)$ are known as \emph{DCC ultrafilters}. We refer to a set $\sigma \cu \mathcal{H}$ satisfying only $(1)$ and $(2)$ simply as an \emph{ultrafilter}. We will also think of an ultrafilter as a map that associates to every hyperplane one of its orientations.

Let $\iota : X \rightarrow \prod_{h \in \mathcal{W}} \{ h^{+}, h^{-} \}$ denote the map that takes each point $y$ to the set $\sigma_y$. Its image $\iota (Y)$ is precisely the collection of all DCC ultrafilters. Endowing $\prod_{h \in \mathcal{W}} \{ h^{+}, h^{-} \}$ with the product topology, we can consider the closure $\overline{\iota(Y)}$, which coincides with the set of all ultrafilters. Equipped with the subspace topology, this is a compact Hausdorff space known as the \emph{Roller compactification} of $Y$; we denote it by $\overline Y$. 

The \emph{Roller boundary} $\partial_R Y$ is defined as the difference $\overline Y\setminus \iota(Y)$. If $Y$ is locally finite, $\iota(Y)$ is open in $\overline Y$ and $\partial_R Y$ is compact; however, this is not true in general. We prefer to imagine $\partial_R Y$ as a set of points at infinity, represented by combinatorial geodesics in $Y$, rather than a set of ultrafilters. We will therefore write $y \in \partial_R Y$ for points in the Roller boundary and employ the notation $\sigma_y \cu \mathcal{H}$ to refer to the ultrafilter representing $y$. We say that a hyperplane $h$ \emph{separates} two points $x$ and $y$ in the Roller boundary if $\sigma_x$ and $\sigma_y$ do not contain the same halfspace bounded by $h$. In other words, they induce opposite orientation on $h$.

Given two points $x$, $y \in \partial_R Y$, we say they lie in the same \emph{component} if and only if there are only finitely many hyperplanes that separate $x$ from $y$. This defines an equivalence relation on $\partial_R Y$ and partitions the Roller boundary into equivalence classes, called components. Each component inherits the structure of a $\CAT$ cube complex whose hyperplanes are a strict subset of the set of hyperplanes of $Y$. We say that a hyperplane $k \in \mathcal{W}(Y)$ \emph{intersects} a component $C$ whenever it corresponds to a hyperplane in $C$. Note that for any two hyperplanes $h$, $k$ that intersect a component $C$, there exist infinitely many $h_i \in \mathcal{W}(Y)$ that intersect both $h$ and $k$.

Let $Y$ have uniformly bounded dimension. A point $x \in \partial_R Y$ is called \emph{Morse} if it admits a combinatorial geodesic representative that is Morse. Denote the set of all Morse points in the Roller boundary of $Y$ by $\partial_{R, M} Y$. By \cite{BeyFio}, there exists a surjective map $\Phi : \partial_{R, M} Y \rightarrow \partial_{M} Y$, which sends any combinatorial geodesic representative $[\gamma] \in \partial_{R, M} Y$ to the point $[\gamma]$ in the Morse boundary represented by the quasi-geodesic $\gamma$.

Let $n \in \mathbb{N}_0$. We call two hyperplanes $h$, $h'$ \emph{$n$-strongly separated}, if they are disjoint and there are at most $n$ many hyperplanes that intersect both $h$ and $h'$. For $n = 0$ we simply write \emph{strongly separated}.

In \cite{CharneySultan}, Charney and Sultan characterized Morse geodesics in uniformly locally finite $\CAT$ cube complexes as follows:

\begin{thm}[Theorem 4.2 in \cite{CharneySultan}] \label{thm:CharneySultan}
Let $Y$ be a uniformly locally finite $\CAT$ cube complex. There exist $r > 0$, $n \geq 0$ (depending only on $D$ and the maximal valence $\nu$), such that a geodesic ray $\gamma$ in $Y$ is $D$-contracting if and only if $\gamma$ crosses an infinite sequence of hyperplanes $h_1, h_2 \dots$ at points $y_i := \gamma \cap h_i$ satisfying

\begin{enumerate}
\item $h_i$, $h_{i+1}$ are $n$-separated and
\item $d(y_i, y_{i+1}) < r$.
\end{enumerate}
\end{thm}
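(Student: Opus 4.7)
I would prove the two implications separately, with the sufficient direction being conceptually cleaner and the necessary direction requiring the main technical work.

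For the sufficient direction ($\Leftarrow$), suppose $\gamma$ crosses a sequence $h_1, h_2, \ldots$ with the stated properties. The key lemma is a ``bottleneck'' estimate: if $h, h'$ are $n$-strongly separated in a cube complex with valence at most $\nu$, then any combinatorial or $\CAT$ geodesic crossing both must enter a region of diameter bounded by a function of $n$ and $\nu$. This follows because the bridge between the carriers of $h$ and $h'$ is controlled by the (at most $n$) hyperplanes meeting both, while uniform local finiteness bounds neighborhoods of these hyperplanes. Combining this with $d(y_i, y_{i+1}) < r$, any geodesic joining two points of $\gamma$ that are far apart in the index must fellow-travel $\gamma$ in a uniformly bounded neighborhood, yielding the Morse property. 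The equivalence of Morse and contracting in $\CAT$ spaces (cf.\ \cite{BestvinaFujiwara, Sultan}) then gives $D$-contraction for some $D=D(n,r,\nu)$.

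For the necessary direction ($\Rightarrow$), assume $\gamma$ is $D$-contracting. I would construct the sequence $\{h_i\}$ inductively. Having chosen $h_1,\ldots,h_i$ with $y_i = \gamma \cap h_i$, look at the hyperplanes crossed by $\gamma$ in the next interval of length $r$, for $r$ to be determined. The central claim is that for suitably chosen $r$ and $n$ depending only on $D$ and $\nu$, at least one such hyperplane $h_{i+1}$ is $n$-strongly separated from $h_i$. The proof is by contradiction: if every hyperplane crossed by $\gamma$ in the window $[y_i, y_i + r]$ admits more than $n$ hyperplanes transverse to both it and $h_i$, then a Ramsey-type argument together with uniform local finiteness forces the existence of a large family of pairwise transverse hyperplanes near $\gamma$. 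This family spans a $\CAT$ flat subcomplex through which one can produce a geodesic $\beta$ whose projection onto $\gamma$ has diameter growing with $n$, contradicting the $D$-contracting assumption once $n$ is large enough in terms of $D$ and $\nu$.

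The hard part is the necessity direction, specifically tuning $r$ and $n$ so that the failure of strong separation produces a concrete violation of the contracting bound. The natural route is through the combinatorial geometry of hyperplane carriers: in a uniformly locally finite cube complex, a large set of pairwise transverse hyperplanes forces a product subcomplex, and geodesics inside such a product can be shown to have projection onto $\gamma$ of unbounded diameter, while the ``size'' of the ball one needs to project is still controlled. Making this quantitative, so that $n$ and $r$ depend only on $D$ and $\nu$, is the main technical difficulty. One also has to ensure the induction produces an \emph{infinite} sequence, which uses that $\gamma$ is a ray (hence unbounded) together with the fact that the chosen step size $r$ is fixed and thus cannot be exhausted in finite time.
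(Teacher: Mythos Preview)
The paper does not prove this statement: Theorem~\ref{thm:CharneySultan} is quoted verbatim from \cite{CharneySultan} (as indicated in the theorem header) and is used as a black box, with no proof given in the paper. There is therefore nothing to compare your proposal against here.

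For what it is worth, your outline tracks the actual Charney--Sultan argument fairly closely in spirit. The sufficient direction does proceed via a bottleneck/gate estimate for $n$-strongly separated hyperplanes, and the necessary direction does argue by contradiction that failure to find well-separated hyperplanes along a window of $\gamma$ produces a large product region (hence a fat geodesic violating the contracting bound). One caution: the ``Ramsey-type argument together with uniform local finiteness forces \ldots\ pairwise transverse hyperplanes'' step is where the real work lies, and your sketch underspecifies how the flat is actually built and how its geodesics are positioned relative to $\gamma$ so that the projection diameter genuinely exceeds $D$. In \cite{CharneySultan} this goes through the combinatorics of hyperplane carriers and disc diagrams rather than an abstract Ramsey argument, so if you intend to write this out you should consult that paper directly rather than the present one.
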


\begin{rem}
Consider a Morse geodesic ray $\gamma$ in a uniformly locally finite $\CAT$ cube complex $Y$. This geodesic ray induces a consistent orientation of hyperplanes and thus a point $y \in \partial_R Y$. The theorem by Charney-Sultan implies that the component $C \subset \partial_R Y$ that contains $y$ is a bounded cube complex. If it were unbounded, we would find for any $N > 0$ a time $T$, such that any two hyperplanes crossed by $\gamma \vert_{[T, \infty)}$, are not $N$-strongly separated (no hyperplane crossed by $\gamma$ can intersect $C$, as $\gamma$ is a geodesic and moves infinitely far away from any hyperplane it crosses). We conclude that the preimage $\Phi^{-1}([\gamma])$ of Morse points $[\gamma] \in \partial_M Y$ is always a bounded component.
\end{rem}




\subsection{Right-angled Artin groups} \label{subsec:RAAGs}

Let $\Gamma$ be a finite, undirected graph with no multiple edges and no loops of length 1. We denote its vertices by $V_{\Gamma}$ and its edges by $E_{\Gamma}$. The Right-angled Artin group (RAAG) associated to $\Gamma$ is defined by

\[ A_{\Gamma} := \langle V_{\Gamma} | [v_i, v_j] \text{ for all } (v_i, v_j) \in E_{\Gamma} \rangle \]

We now build a cube complex whose fundamental group is $A_{\Gamma}$. Its universal covering $Y_{\Gamma}$ will inherit a cube complex structure and will be $\CAT$. Furthermore, $A_{\Gamma}$ acts properly, cocompactly by cube-automorphisms on $Y_{\Gamma}$. We start with one vertex and glue both endpoints of $V_{\Gamma}$-many edges to this vertex. We label the edges by the vertices $v_i \in V_{\Gamma}$. Whenever we have vertices $v_{i_1}, \dots, v_{i_k}$ such that $(v_{i_j}, v_{i_j'}) \in E_{\Gamma}$, we glue a $k$-cube along the edges $v_{i_1}, \dots v_{i_k}$ in such a way that the $k$-cube is glued to become a $k$-dimensional torus (i.\,e. parallel edges of the $k$-cube are all glued to the same edge in the complex). The resulting cube complex is called the Salvetti complex of $\Gamma$. We denote its universal covering by $Y_{\Gamma}$. The following statement is proven during the proof of Theorem 5.1 in \cite{CordesHume}.

\begin{prop}[\cite{CordesHume}]
Given a finite, undirected graph $\Gamma$, any contracting geodesic ray in $Y_{\Gamma}$ crosses an infinite sequence of hyperplanes as in Theorem \ref{thm:CharneySultan}, where all hyperplanes in the sequence are strongly separated.
\end{prop}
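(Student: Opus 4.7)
The plan is to start from the $n$-separated sequence $(h_i)_{i \geq 1}$ produced by Theorem~\ref{thm:CharneySultan}, and to extract an infinite subsequence whose consecutive hyperplanes are strongly separated (i.e., $0$-separated). The distances $d(y_i,y_{i+1}) < r$ are automatically preserved when passing to a subsequence only in the weaker sense that successive crossing points of $\gamma$ remain at bounded distance along $\gamma$, so the real content is the upgrade from $n$-separated to $0$-separated.

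First I would record a convexity observation, valid in any $\CAT$ cube complex: if $a < b < c$ and a hyperplane $k$ is transverse to both $h_a$ and $h_c$, then $k$ is transverse to $h_b$ as well. Indeed, pick $p \in h_a \cap k$ and $q \in h_c \cap k$; by convexity of $k$ the geodesic segment $[p,q]$ lies inside $k$. Since the $h_i$ are pairwise disjoint (consecutive $n$-separated pairs are disjoint, and disjointness propagates along a geodesic), $h_b$ separates the halfspaces containing $h_a$ and $h_c$, so $p$ and $q$ lie on opposite sides of $h_b$ and $[p,q]$ must cross $h_b$ inside $k$. Applied to our sequence, this shows that
\[ T_{a,j} := \{ k \in \mathcal{W}(Y_\Gamma) \mid k \text{ is transverse to both } h_a \text{ and } h_j \} \]
is non-increasing in $j$ and bounded in cardinality by $n$, so it stabilizes to a set $T_{a,\infty}$.

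The core of the argument is to prove that $T_{a,\infty} = \emptyset$ for all sufficiently large $a$, and this is where the Salvetti-specific structure must enter. Suppose for contradiction that $k \in T_{a,\infty}$. Then $k$ is transverse to every $h_i$ with $i \geq a$, so the restrictions $h_i \cap k$ form an infinite sequence of pairwise disjoint hyperplanes inside the $\CAT$ cube complex $k$, consistently oriented by the same Roller data as $\gamma$'s endpoint. Using the product decomposition $N(k) = k \times [0,1]$ of the carrier of $k$, this data yields a combinatorial geodesic ray $\gamma' \subset N(k)$ crossing exactly the hyperplanes $h_i$ for $i \geq a$. Because $\gamma$ meets $h_i$ at $y_i$ with $d(y_i, y_{i+1}) < r$, and the corresponding crossing points of $\gamma'$ lie in the same $h_i$, uniform local finiteness together with the product structure of $N(k)$ should yield a uniform bound on $d(y_i, \gamma')$, exhibiting $\gamma'$ as a parallel quasi-geodesic ray at bounded Hausdorff distance from $\gamma$. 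This contradicts the contracting (Morse) property of $\gamma$, since a contracting geodesic cannot admit a distinct, uniformly close, asymptotic companion.

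Once $T_{a,\infty}=\emptyset$ for large $a$ is known, extracting the desired subsequence is immediate: setting $i_1$ large enough and then inductively choosing $i_{k+1}$ so large that $T_{i_k,i_{k+1}}=\emptyset$, we obtain a subsequence in which $h_{i_k}$ and $h_{i_{k+1}}$ share no common transverse hyperplane. The main obstacle I foresee is precisely the contradiction in the middle step: controlling $d(y_i,\gamma')$ requires a careful use of uniform local finiteness and the product structure of $N(k)$, and the argument must guarantee that the intersections $h_i \cap k$ do not drift arbitrarily far from $y_i$ inside $h_i$. This is the step that genuinely uses the geometry of Salvetti complexes beyond the general $\CAT$ cube complex structure; in a RAAG, the carrier $N(k) = k \times [0,1]$ of a hyperplane of label $v$ has $k$ itself isomorphic to the universal cover of the Salvetti complex of $\lk(v)$, and this symmetry is what makes the parallel ray $\gamma'$ stay uniformly close to $\gamma$.
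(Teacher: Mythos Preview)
The paper does not prove this proposition; it is attributed to \cite{CordesHume} (``proven during the proof of Theorem~5.1''), so there is no in-paper argument to compare your proposal against.

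On your proposal itself, the contradiction step is wrong as stated. The claim that ``a contracting geodesic cannot admit a distinct, uniformly close, asymptotic companion'' is false: in any $\CAT$ space the geodesic ray from any other basepoint in the same asymptotic class is exactly such a companion. What actually contradicts the contracting property is lying in a bounded neighbourhood of a nontrivial product (equivalently, bounding flat half-strips of arbitrary width). The RAAG structure does supply this---a hyperplane $k$ labelled $v$ sits as a slice inside a convex subcomplex $Y_{\mathrm{star}(v)} \cong \mathbb{R} \times Y_{\lk(v)}$---but you never invoke that product, and your ray $\gamma'$ alone yields nothing. You also do not establish that $\gamma'$ is at bounded distance from $\gamma$ in the first place: $\gamma$ crosses many hyperplanes between consecutive $h_i$ that $\gamma'$ does not, and these can accumulate to separate the two rays by unbounded distance; you flag this as a technicality, but it is the heart of the matter and your sketch offers no mechanism to control it. Finally, to retain the bound $d(y_{i_k}, y_{i_{k+1}}) < r'$ you need uniformly bounded index jumps $i_{k+1}-i_k$, yet knowing only that $|T_{a,j}| \leq n$ is non-increasing and eventually zero does not bound the time it takes to reach zero.
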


\begin{rem}
In \cite{Fernos,Fernos-Lecureux-Matheus}, the notion of a regular point was introduced, which can be defined as follows (cf. Proposition~7.5 in \cite{Fernos}): A point $\xi \in \partial_R Y$ is called \emph{regular} if the ultrafilter $\sigma_{\xi}$ contains an infinite chain $h_0 \subsetneq h_1 \subsetneq \dots$ such that the corresponding hyperplanes $w_0, w_1 \dots$ are strongly separated. From the results above, it follows that the Morse boundary of a RAAG is contained in the image of the regular points under the map $\Phi$.
\end{rem}




\section{Comparing $\FG$ and $\FQ$} \label{sec:Comparison}

Let $Y$ be a $\CAT$ cube complex and fix a base point $o \in Y$ for the rest of this section. By Cashen-Mackay, the contracting boundary $\partial_M Y$ carries the topologies of fellow-traveling geodesics -- $\mathcal{FG}$ -- and of fellow-traveling quasi-geodesics -- $\mathcal{FQ}$. We will introduce a third topology and then show that it coincides with $\mathcal{FG}$ but differs from $\mathcal{FQ}$ in general.

Let $h_1, \dots, h_n$ be distinct hyperplanes in $Y$. Define the set

\begin{equation*}
\begin{split}
U_{o, h_1, \dots, h_n} := \{ \xi \in \partial_M Y | \text{The unique geodesic} & \text{ representative of $\xi$ based at $o$}\\
& \text{ crosses the hyperplanes $h_1, \dots, h_n$.} \}.
\end{split}
\end{equation*}

It is easy to see that the collection $\{ U_{o, h_1, \dots, h_n} \}_{n, h_1, \dots, h_n}$ is a basis of the topology $\Hyp$ introduced in section \ref{sec:Introduction}.

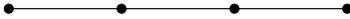
\begin{figure} 
\begin{tikzpicture}[scale=1.50]
\draw [fill] (-1.5,0) circle [radius=0.04cm];
\draw [fill] (-1.5,0) -- (-0.5, 0);
\draw [fill] (-0.5, 0) circle [radius=0.04cm];
\draw [fill] (-0.5, 0) -- (0.5, 0);
\draw [fill] (0.5, 0) circle [radius=0.04];
\draw [fill] (0.5, 0) -- (1.5, 0);
\draw [fill] (1.5, 0) circle [radius=0.04];
\end{tikzpicture}
\caption{The graph $\Gamma_0$ that induces $G_0 := A_{\Gamma_0}$.}
\label{fig:Graph} 
\end{figure}

\begin{figure}
  \centering
  \def\svgwidth{3in}
  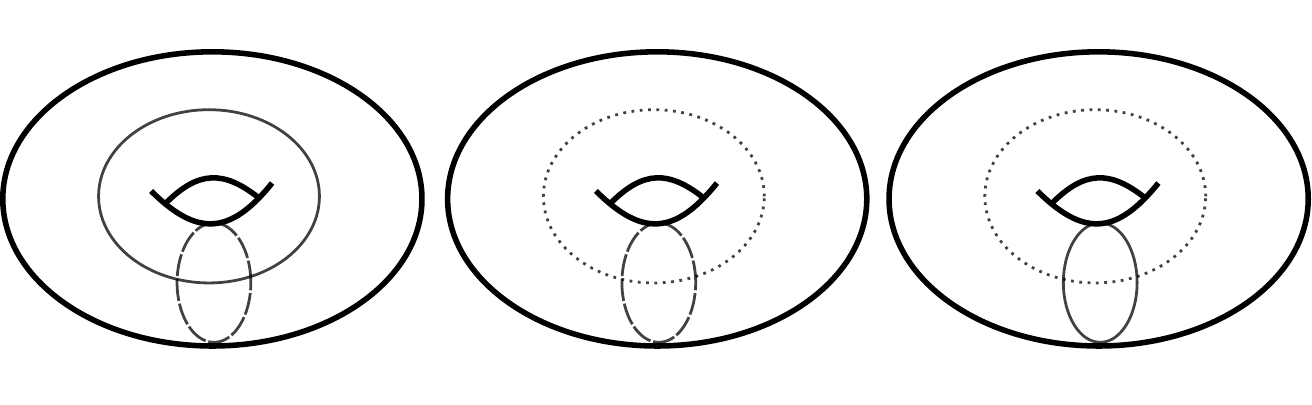
\caption{The Croke--Kleiner example}
\label{fig:Torus}
\end{figure}

\begin{prop} \label{prop:geodesic topology}
Let $Y$ be a uniformly locally finite $\CAT$ cube complex, $X$ its contracting boundary as a set. Then $\FG = \Hyp$ on $X$.
\end{prop}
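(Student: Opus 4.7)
The plan is to verify, at each Morse boundary point $\xi \in \partial_M Y$ with CAT(0) geodesic representative $\gamma$ based at $o$, that the two neighborhood bases $\{V_{o, R}(\xi)\}_R$ of $\FG$ and $\{U_{o, h_1, \dots, h_n}\ni \xi\}$ of $\Hyp$ are mutually refining; this then yields $\FG = \Hyp$ on $\partial_M Y$.

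For the inclusion $\FG \subseteq \Hyp$ (given a hyperplane-crossing neighborhood, produce a fellow-traveling one inside it), I fix $h_1, \dots, h_n$ crossed by $\gamma$ and exhibit a suitable $R$. The key input is that in $\CAT$ the distance to a convex subcomplex is convex, so $t \mapsto d(\gamma(t), h_i)$ is convex. Because $\gamma$ genuinely crosses $h_i$ (in the sense that $\gamma$ is ultimately in the open halfspace $h_i^+$ not containing $o$), this convex function is eventually positive, and hence grows linearly to infinity. Choose $R$ large enough that $d(\gamma(t), h_i) > \kappa(\rho, 1, 0)$ for every $t \geq R$ and every $i \leq n$. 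Then for any $\eta \in V_{o, R}(\xi)$ its geodesic $\beta$ meets $N_{\kappa(\rho, 1, 0)}(\gamma|_{[R, \infty)})$ at some point $q$ strictly inside $h_i^+$; since $\beta(0) = o \in h_i^-$, $\beta$ must cross every $h_i$, so $\eta \in U_{o, h_1, \dots, h_n}$.

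For the opposite inclusion $\Hyp \subseteq \FG$, given $R > 0$ I use Theorem \ref{thm:CharneySultan} to obtain an $n$-strongly separated sequence $k_1, k_2, \dots$ crossed by $\gamma$ at points $y_i := \gamma \cap k_i$ with $d(y_i, y_{i+1}) < r$, and take the list $h_i := k_i$ for $i \leq M$, where $M$ will be determined. The whole problem reduces to the following uniform bound:

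\emph{Key claim.} There is a constant $D = D(Y, n, r)$, independent of $M$, so that any CAT(0) geodesic $\beta$ from $o$ crossing $k_1, \dots, k_M$ satisfies $d(\beta \cap k_M, y_M) \leq D$.

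Granted the key claim, I pick $M$ so that $d(o, y_M) \geq R(\rho, 1, 0, R, D)$ in the notation of Corollary \ref{cor:Reformulation}; that corollary (applied with $p = y_M$ and $J = D$) then forces $\beta$ to fellow-travel $\gamma$ for distance $R$, so $\eta \in V_{o, R}(\xi)$.

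The key claim is the expected main obstacle. My plan is to exploit that, for $n$-strongly separated hyperplanes in a uniformly locally finite cube complex, the projection $\pi_{k_i}(k_{i+1})$ is a bounded subcomplex of $k_i$ of diameter at most some $D_0 = D_0(n, \nu)$, the valence bound $\nu$ entering through the $\leq n$ hyperplanes crossing both $k_i$ and $k_{i+1}$. Since $y_{i+1}$ and $z_{i+1} := \beta \cap k_{i+1}$ both lie in $k_{i+1}$, their projections to $k_i$ lie in this bounded bridge, which combined with the $1$-Lipschitz property of $\pi_{k_i}$ and $d(y_i, y_{i+1}) < r$ gives $d(\pi_{k_i}(z_{i+1}), y_i) \leq r + D_0$. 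The genuinely delicate step is to relate $z_i := \beta \cap k_i$ itself to $\pi_{k_i}(z_{i+1})$; this does not follow from Lipschitz projection since $d(z_i, z_{i+1})$ can a priori be large. To close this gap I anticipate needing either a two-sided sandwich argument exploiting the crossings $z_{i-1}$ and $z_{i+1}$ jointly with the straight-line character of CAT(0) geodesics between strongly separated hyperplanes, or, failing that, augmenting the hyperplane list to include every hyperplane $\gamma$ crosses between consecutive $k_i$'s and using uniform local finiteness to bound $\beta$'s drift from $\gamma$ slab-by-slab.
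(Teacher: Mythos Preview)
Your two inclusion labels are swapped: showing that every hyperplane neighbourhood $U_{o,h_1,\dots,h_n}$ contains some $V_{o,R}(\xi)$ proves $\Hyp\subseteq\FG$, not $\FG\subseteq\Hyp$, and vice versa. With that relabelling, your ``easy'' direction (convexity of $t\mapsto d(\gamma(t),h_i)$ forces a $\kappa$-close point to lie in $h_i^+$) is exactly the paper's argument.

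For the hard direction your diagnosis is accurate: the projection approach stalls precisely where you say, and the fix is indeed the two--sided sandwich. Here is how the paper executes it, so you can close the gap. Take \emph{three} consecutive $m$--strongly separated hyperplanes $k_{N},k_{N+1},k_{N+2}$ from the Charney--Sultan sequence, require $\beta$ to cross all three, and bound the distance at the \emph{middle} one: set $p:=\gamma\cap k_{N+1}$, $q:=\beta\cap k_{N+1}$ and count hyperplanes $k\in\mathcal W(p,q)$. Any such $k$ meets $k_{N+1}$ and is crossed by exactly one of $\gamma|_{[o,p]},\beta|_{[o,q]}$. Now trichotomise on whether $k$ meets the short $\gamma$--segment between $k_N$ and $k_{N+1}$ (at most $\sqrt{n}\,r$ such, by the bi-Lipschitz comparison of the CAT(0) and $\ell^1$ metrics), or meets neither segment between $k_N$ and $k_{N+1}$ (then $k$ meets both $k_N$ and $k_{N+1}$, at most $m$ such), or meets the $\beta$--segment between $k_N$ and $k_{N+1}$. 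In the last case use the \emph{forward} hyperplane $k_{N+2}$: since $\beta$ is geodesic it cannot recross $k$, so between $k_{N+1}$ and $k_{N+2}$ the hyperplane $k$ must hit either the short $\gamma$--segment there or $k_{N+2}$ itself, giving at most $\sqrt{n}\,r+m$ such. Altogether $\#\mathcal W(p,q)\le 2(m+\sqrt{n}\,r)$, hence $d(p,q)\le 2\sqrt{n}(m+\sqrt{n}\,r)=:D$, and your invocation of Corollary~\ref{cor:Reformulation} finishes the argument. Note in particular that the bound is obtained at $k_{N+1}$, not at the last listed hyperplane $k_M$; adjust your Key Claim accordingly (use $U_{o,k_{N+2}}$ and estimate at $y_{N+1}$).
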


\begin{cor} \label{cor:basepointindependenceofHYP}
$\Hyp$ is independent of the choice of base point.
\end{cor}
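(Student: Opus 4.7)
The plan is to deduce the corollary immediately by chaining the two preceding results. Since $Y$ is a $\CAT$ cube complex, equipping it with its $\CAT$ metric makes it a $\CAT$ (hence, in particular, Busemann) space, so Proposition \ref{prop:basepointindependence} applies: the topology $\FG$ on $\partial_M Y$ does not depend on the choice of base point. Proposition \ref{prop:geodesic topology} then identifies $\FG$ with $\Hyp$ on the Morse boundary for every uniformly locally finite $\CAT$ cube complex. Combining these two facts yields base-point independence of $\Hyp$.

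More concretely, I would argue as follows. Fix two vertices $o, o' \in Y$ and write $\FG_o$, $\FG_{o'}$, $\Hyp_o$, $\Hyp_{o'}$ for the corresponding topologies on $\partial_M Y$. By Proposition \ref{prop:geodesic topology} applied with base point $o$, we have $\Hyp_o = \FG_o$, and similarly $\Hyp_{o'} = \FG_{o'}$. By Proposition \ref{prop:basepointindependence}, $\FG_o = \FG_{o'}$, since $Y$ is $\CAT$. Stringing the equalities together gives $\Hyp_o = \Hyp_{o'}$, which is exactly the claim.

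There is no serious obstacle here, as the work has already been done in the two preceding propositions. The only thing to verify is that the hypotheses of both propositions really apply: Proposition \ref{prop:geodesic topology} requires $Y$ to be uniformly locally finite (which is the standing hypothesis in the statement of Corollary \ref{cor:basepointindependenceofHYP} via the context of the section, inherited from Proposition \ref{prop:geodesic topology}), while Proposition \ref{prop:basepointindependence} needs $Y$ to be $\CAT$, which is built into the definition of a $\CAT$ cube complex once we fix the $\CAT$ metric. Since the set $X$ of points of the contracting boundary does not depend on a base point, and the equality $\FG = \Hyp$ holds as equality of topologies on this common underlying set, the chain of equalities above makes sense and completes the proof.
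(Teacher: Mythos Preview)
Your proposal is correct and follows exactly the same approach as the paper: the paper's proof simply says ``This follows from Proposition \ref{prop:basepointindependence} and Proposition \ref{prop:geodesic topology}'', which is precisely the chain of equalities $\Hyp_o = \FG_o = \FG_{o'} = \Hyp_{o'}$ you spell out. Your additional remarks on verifying the hypotheses are accurate and just make explicit what the paper leaves implicit.
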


Consider the right-angled Artin group $G_0 := A_{\Gamma_0}$ induced by the graph $\Gamma_0$ depicted in figure \ref{fig:Graph}. Its Salvetti complex can be obtained as follows. Consider three distinct tori and consider two simple closed curves in each as depicted in figure \ref{fig:Torus}. Denoting the curves of the $i$-th torus by $a_i$ and $b_i$, we obtain the Salvetti complex of $G$ by glueing the tori by identifying $b_1$ with $a_2$ and $b_2$ with $a_3$. Denote its universal covering by $Y_0 := Y_{\Gamma_0}$.

\begin{prop} \label{prop:quasi-geodesic topology}
If $X = \partial_M Y_0$ for $\Gamma_0$ as in figure \ref{fig:Graph}, then $\Hyp \subsetneq \FQ$ on $X$.
\end{prop}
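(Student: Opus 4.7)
The plan is to exhibit a Morse point $\xi$ and a sequence $\xi_n \in \partial_M Y_0$ with $\xi_n \to \xi$ in $\Hyp$ but $\xi_n \not\to \xi$ in $\FQ$; since $\Hyp = \FG \subseteq \FQ$ by Proposition \ref{prop:geodesic topology} and \cite{CashenMackay}, this establishes the strict containment $\Hyp \subsetneq \FQ$. The mechanism exploits that every $2$-flat $F \cong \R^2$ inside $Y_0$ admits many distinct combinatorial geodesics between any two of its vertices; all are $(\sqrt{2}, 0)$-quasi-geodesics in the CAT(0) metric (by the $\sqrt{2}$-bi-Lipschitz equivalence between $\ell^1$ and CAT(0) on the $2$-dimensional complex $Y_0$) but may be spatially far apart, so a single Morse point admits quasi-geodesic representatives that traverse $Y_0$ very differently.

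Take $\xi := [\gamma]$ where $\gamma$ is the combinatorial geodesic from $o$ spelling $adad\ldots$. Because $a$ and $d$ have no common neighbor in $\Gamma_0$, no hyperplane intersects both an $a$- and a $d$-hyperplane, so consecutive crossings of $\gamma$ are strongly separated and $\gamma$ is Morse by Theorem \ref{thm:CharneySultan}, with some contracting function $\rho$. For each $n$, construct a Morse point $\xi_n$ whose CAT(0) geodesic representative $\gamma_n$ from $o$ agrees with $\gamma$ on the initial length-$2n$ segment before branching off along a distinct strongly separated $(a,d)$-pattern; then $\xi_n \to \xi$ in $\Hyp$ follows directly from the neighborhood basis $\{U_{o, h_1, \ldots, h_k}\}$.

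The heart of the proof is building, for each $n$, a quasi-geodesic representative $\beta_n$ of $\xi_n$ with uniformly bounded quasi-isometric constants $(K,C)$ that fails to fellow-travel $\gamma$. I would route $\beta_n$ through an L-shape detour in the $(a,b)$-flat at $o$ of transverse size $\sim n$ (namely $n$ $b$-edges followed by $n$ $a$-edges), then continue by a combinatorial geodesic engineered to approach $\xi_n$ while steadily progressing away from $o$. The crucial distance estimate, using the no-common-neighbor property of $a$ and $d$, is that a point of the L-shape at $b$-coordinate $k$ has combinatorial distance at least $k$ from every vertex of $\gamma$; thus for $\kappa := \kappa(\rho, K, C)$ and any fixed $R_0 > \kappa$, one obtains $\beta_n \cap N_\kappa(\gamma \setminus B_{R_0}(o)) = \emptyset$ for $n$ large, giving $\xi_n \notin U_{o, R_0}(\xi)$ and hence $\xi_n \not\to \xi$ in $\FQ$.

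The main obstacle is achieving bounded constants $(K,C)$, since the na\"ive continuation of the L-shape by the CAT(0) geodesic from its endpoint to $\xi_n$ tends to head back toward $o$, blowing up the quasi-geodesic constants. Overcoming this requires either an explicit $\ell^1$-geodesic continuation that always progresses outward, or nesting L-shape detours at multiple scales along $\gamma$ so that the resulting path moves steadily to infinity. The robustness of the counter-example---that no minor perturbation of Cashen--Mackay's fellow-traveling definition rescues equality---reflects the fact that the L-shape detours can be made to separate $\beta_n$ from $\gamma$ by arbitrarily large distance while preserving the same quasi-geodesic constants.
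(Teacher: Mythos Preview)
Your overall strategy---produce a sequence $\xi_n\to\xi$ in $\Hyp$ together with uniformly $(K,C)$-quasi-geodesic representatives $\beta_n$ of $\xi_n$ that never enter $N_\kappa(\gamma\setminus B_{R_0}(o))$---is exactly the mechanism the paper uses. But your proposal is explicitly unfinished: you name the decisive obstacle (getting uniform constants while the detour size grows) and then gesture at two possible fixes without carrying either out. As written this is a plan, not a proof.

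More seriously, your particular choice $\gamma=(ad)^\infty$ makes the obstacle genuinely hard to overcome. Since $a$ and $d$ share no neighbour in $\Gamma_0$, consecutive edges of $\gamma$ never lie in a common $2$-flat; your single $L$-shape lives in the $(a,b)$-flat at $o$, but after it you are at $a^nb^n$ and must reach a ray asymptotic to $\gamma_n$, whose first $2n$ steps alternate $a,d$. Because $b$ commutes with neither $d$ nor with anything on $\gamma$ beyond the initial $a$, any continuation must shed all $n$ of the $b$-coordinates before it can make $d$-progress, which forces exactly the back-tracking you flag and blows up the constants. The ``nested detours at multiple scales'' idea cannot be implemented along $(ad)^\infty$ for the same reason: there are no intermediate flats along $\gamma$ in which to nest the detours.

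The paper avoids this by choosing $\gamma$ differently. Its $\gamma$ is a diagonal geodesic that cycles through all four hyperplane types, so that $\gamma$ traverses an infinite sequence of $2$-flats $F_1,F_2,\dots$ with each $F_l\cap F_{l+1}$ a line. The detour $\beta$ is then built flat by flat: in each $F_l$ one moves a distance $N_l$ \emph{along} the line $F_l\cap F_{l+1}$ away from $\gamma$, with $N_l$ chosen to dominate the total length of $\beta_1*\cdots*\beta_{l-1}$. Because this transverse displacement in $F_l$ becomes honest forward progress once one passes into $F_{l+1}$, the concatenation is a uniform $(8\sqrt 2,1)$-quasi-geodesic even though $N_l\to\infty$, and it stays at distance $\geq\Delta$ from $\gamma$ throughout. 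Truncating $\beta$ past any given hyperplane $k$ and extending by a Morse tail yields the required $\eta_k\in U_{o,k}\setminus U_{o,R}(\xi)$. The moral is that the ``nesting'' you suggest is indeed the right idea, but it only works once $\gamma$ is chosen to thread a chain of flats; with $\gamma=(ad)^\infty$ there is nothing to nest into.
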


\begin{proof}[Proof of Corollary \ref{cor:basepointindependenceofHYP}]
This follows from Proposition \ref{prop:basepointindependence} and Proposition \ref{prop:geodesic topology}
\end{proof}

\begin{proof} [Proof of Proposition \ref{prop:geodesic topology}]
Suppose, all cubes in $Y$ have dimension at most $n$. We start by showing that $\FG \subset \Hyp.$ Let $\xi \in X$ and $U_{o, R}(\xi)$ an element of the neighbourhood basis of $\FG$. Denote the geodesic representative of $\xi$ starting at $o$ by $\gamma$. Let $(h_i)_i$ be the sequence of hyperplanes crossed by $\gamma$ ordered in the order they are crossed by $\gamma$. We find a constant $D_{\xi}$ such that the geodesic representative of $\xi$ starting at $o$ is $D_{\xi}$-contracting. Denote $\kappa := \kappa(D_{\xi}, 1, 0)$. By \cite{CordesHume}, we find $n \in \mathbb{N}$, $r > 0$ and a subsequence $(h_{i_j})_j$ of $(h_i)_i$ consisting of pairwise disjoint, $m$-strongly separated hyperplanes such that $d(h_{i_j}, h_{i_{j+1}}) < r$. Let $h_{i_N}$ be the first hyperplane in $(h_{i_j})_j$ such that $N > 1$ and $d(o, h_{i_j}) > R + C$, where
\[ C := 4\sqrt{n}(m + \sqrt{n}r) + 8\kappa.\]

Consider the basis-element $U_{o, h_{i_{N+2}}}$. Clearly, $\xi \in U_{o, h_{i_{N+2}}}$. We claim that $U_{o, h_{i_{N+2}}} \subset U_{o, R}(\xi)$. Let $\eta \in U_{o, h_{i_{N+2}}}$ and let $\beta$ be the geodesic representative of $\eta$ starting at $o$. Note that, since $\beta$ is a geodesic, it can cross every hyperplane at most once. Since $\eta \in U_{o, h_{i_{N+2}}}$, $\beta$ has to cross $h_{i_{N}}$, $h_{i_{N+1}}$ and $h_{i_{N+2}}$ at some point.

Denote the restrictions of $\alpha$ and $\beta$ to the area between $h_{i_j}$ and $h_{i_{j+1}}$ by $\alpha_j$ and $\beta_j$ respectively. We focus our attention on the restriction of $\alpha$ and $\beta$ to the area between $h_{i_N}$ and $k_{i_{N+2}}$, i.\,e. on $\alpha_N$, $\alpha_{N+1}$, $\beta_N$ and $\beta_{N+1}$. We will estimate the distance between $p := \alpha \cap h_{i_{N+1}}$ and $q := \beta \cap h_{i_{N+1}}$. Since $Y$ has uniformly bounded dimension, we can do so by estimating the number of hyperplanes that separate $p$ from $q$.

Suppose, $k \in \mathcal{W}(p,q)$. Since $\alpha$ and $\beta$ both start at $o$, $k$ has to be crossed by exactly one of them at some point before the two paths cross $h_{i_{N+1}}$. There are three possibilities.

Case 1: Suppose, $k$ intersects neither $\alpha_N$ nor $\beta_N$. Then, $k$ intersects $h_{i_N}$. Since $h_{i_N}$ and $h_{i_{N+1}}$, there can be at most $m$-many such hyperplanes.

Case 2: Suppose, $k$ intersects $\alpha_N$. By assumption, $d(\alpha \cap h_{i_N}, \alpha \cap h_{i_{N+1}}) < r$. Therefore, there can be at most $\sqrt{n}r$-many such hyperplanes.

Case 3: Suppose, $k$ intersects $\beta_N$. Focus on the area between $h_{i_{N+1}}$ and $h_{i_{N+2}}$. Since $\beta$ is a geodesic, $k$ has to intersect either $\alpha_{N+1}$ or $h_{i_{N+2}}$. Again, there are at most $m + \sqrt{n}r$-many such hyperplanes.

We conclude that, in total, there are at most $2(m+ \sqrt{n}r)$-many hyperplanes that separate $p$ from $q$. Thus, $d(p,q) \leq 2\sqrt{n}(m + \sqrt{n}r)$. Lemma 4.6 in \cite{CashenMackay} now implies that, there exists a point $q' \in \beta$ which is $\kappa$-close to $\alpha \diagdown B_R(o)$. This relies on our choice of $C$ which was made specifically to suit Cashen-Mackays result. This implies that $\eta \in U_{o, R}(\xi)$. Therefore, $\xi \in U_{o, k_{i_N}} \subset U_{o, R}(\xi)$ which implies that $U_{o, R}(\xi) \in \Hyp$.\\

We are left to show that $\Hyp \subset \FG$. Let $h_1, \dots, h_n$ be distinct hyperplanes and $\xi \in U_{o, h_1, \dots, h_n}$. Let $\gamma$ be the geodesic representative of $\xi$ based at $o$. We find a constant $D_{\xi}$ such that $\gamma$ is $D_{\xi}$-contracting. Choose $R > 0$ sufficiently large, such that $d(h_i, \gamma \diagdown B_R(o)) > \kappa(D_{\xi}, 1, 0)$ for all $i$. Such $R$ exists, since a geodesic $\gamma$ cannot stay uniformly close to any hyperplane it crosses (for example because the distance function $d(\gamma(t), \beta(t))$ of two geodesics in a $\CAT$ space is convex). Clearly, $\xi \in U_{o, R}(\xi)$ and we need to show that $U_{o, R}(\xi) \subset U_{o, h_1, \dots, h_n}$.

Let $\eta \in U_{o, R}(\xi)$. Then there exists a point $p \in \beta$, such that $d(p, \gamma \diagdown B_R(o)) \leq \kappa(D_{\xi}, 1, 0)$. In particular, if $\gamma_s \in \gamma \diagdown B_R(o)$ satisfies $d(p, \gamma) = d(p, \gamma_s)$, then the geodesic $\delta$ from $p$ to $\gamma_s$ is completely contained in the $\kappa(D_{\xi}, 1, 0)$-neighbourhood of $\gamma \diagdown B_R(o)$. Since $d(h_i, \gamma \diagdown B_R(o)) > \kappa(D_{\xi}, 1, 0)$, $\delta$ cannot intersect $h_i$ for any $i$. We conclude that, for all $i$, $h_i$ separates $o$ from $p$, which implies that $\beta$ crosses $h_i$ for all $i$. Therefore, $\eta \in U_{o, h_1, \dots, h_n}$ and $\xi \in U_{o, R}(\xi) \subset U_{o, h_1, \dots, h_n}$, which completes the proof.
\end{proof}

\begin{proof} [Proof of Proposition \ref{prop:quasi-geodesic topology}]
By \cite{CashenMackay}, we know that $\FG \subset \FQ$, so we are left to show that $\Hyp \neq \FQ$. In particular, we need to show that there exists some neighbourhood $U_{o, R}(\xi)$ such that there exists no $h_1, \dots, h_n$ such that $\xi \in U_{o, h_1, \dots, h_n} \subset U_{o, R}(\xi)$.\\

We recall a few facts about the space $Y_0$. In \cite{CrokeKleiner}, Croke and Kleiner showed that $Y_0$ can be written as a union of the following `blocks': Consider two of the tori in the Salvetti complex that are identified along a curve (i.\,e.\,the first and second torus, or the second and third torus). The preimage of this subspace under the projection from the universal covering is a disjoint union with each component being isometric to the product of a $4$-valent tree with $\mathbb{R}$. Furthermore, the preimage of any of the three tori is a disjoint union of $2$-dimensional flats. We denote the collection of flats corresponding to the first torus by $\mathcal{A}$, the collection corresponding to the second torus by $\mathcal{B}$ and the collection corresponding to the third torus by $\mathcal{C}$. The Salvetti complex of $\Gamma$ has four hyperplanes. Denote the collection of preimages of the hyperplane crossed by the curve $a_1$ by $A$, the collection of preimages of the hyperplane crossed by the curve $b_1 = a_2$ by $B$, the collection of preimages of the hyperplane crossed by the curve $b_2 = a_3$ by $C$ and the collection of preimages of the hyperplane crossed by the curve $b_3$ by $D$. Thus, the collection of hyperplanes in $Y_0$ is the disjoint union of $A$, $B$, $C$, $D$.\\

Before we start with the main construction, we remark that, whenever we concatenate paths in the following proof, we do not rescale their parametrisation. We adjust their parametrisation by time-shift to allow concatenation, but they remain arc-length parametrised.\\

We will introduce a specific point $\xi \in \partial_M Y_0$ and find some neighbourhood $U_{o, R}(\xi)$ as described above. Fix a base point $o \in Y_0^{(0)}$ and choose edges, incident to $o$, that induce hyperplanes $b_0 \in B$ and $c_1 \in C$ respectively. Consider the unique geodesic segment that starts at $o$, has length $\sqrt{2}$ and crosses both $b_0$ and $c_1$ at an angle of $\frac{\pi}{4}$. Denote this geodesic segment by $\gamma_1$ and its second endpoint by $p_1$. Note that both endpoints of $\gamma_1$ are vertices in $Y_0$. Let $c_1' \in C$ be the unique hyperplane in $C$ that has minimal distance to $p_1$ and choose a hyperplane $d_1 \in D$ of minimal distance to $p_1$. Define $\gamma_2$ to be the unique geodesic segment that starts at $p_1$, has length $\sqrt{2}$ and crosses both $c_1'$ and $d_1$ at an angle of $\frac{\pi}{4}$. Denote the second endpoint of $\gamma_2$ by $p_2$. Let $c_1'' \in C$ be the unique hyperplane in $C$ that is not equal to $c_1'$ and has minimal distance from $p_2$. Choose $b_1 \in B$ with minimal distance from $p_2$. Define $\gamma_3$ to be the unique geodesic segment that starts at $p_2$, has length $\sqrt{2}$ and crosses both $c_1''$ and $b_1$ at an angle of $\frac{\pi}{4}$. Denote the second endpoint of $\gamma_3$ by $p_3$. Let $b_1'$ the unique hyperplane in $B$ that is not equal to $b_1$ and has minimal distance from $p_3$. Choose $a_1 \in A$ with minimal distance to $p_3$. Define $\gamma_4$ to be the unique geodesic segment that starts at $p_3$, has length $\sqrt{2}$ and crosses both $b_1'$ and $a_1$ at an angle of $\frac{\pi}{4}$. Denote the second endpoint of $\gamma_4$ by $p_4$. Let $b_1''$ be the unique hyperplane in $B$ not equal to $b_1'$ and has minimal distance from $p_4$. Choose $c_2 \in C$ with minimal distance to $p_4$. Repeat the construction above to construct a contracting geodesic $\gamma$ in $Y_0$, which is the concatenation of all the $\gamma_i$, and denote $\xi := [\gamma] \in \partial_M Y_0$. Note that $\gamma$ crosses at most three hyperplanes in every flat it crosses.

The image of $\gamma$ crosses an infinite collection of flats $(F_l)_l$, with $F_l \in \mathcal{A} \cup \mathcal{B} \cup \mathcal{C}$ ordered in the way they are crossed by $\gamma$. Indeed, $\gamma_l = \gamma \cap F_l$, where we identify any path segment with its image. Denote by $a_l$, $b_l$, $c_l$, $d_l$ the number of hyperplanes in $A$, $B$, $C$, $D$ respectively crossed by $\gamma_l$.\\

Let $R \gg 0$ and pick any finite collection of hyperplanes $h_1, \dots, h_n$ each of which is separating $o$ from $\xi$. Since $\gamma$ is contracting, we find a sequence of strongly separated hyperplanes $(h_{i_j})_j$ as before. In particular, we find a hyperplane $k := h_{i_N}$ such that for all $i$, $k$ separates $\xi$ from $h_i$. We have $\xi \in U_{o, k} \subset U_{o, h_1, \dots, h_n}$. We will construct a $(8\sqrt{2},1)$-quasi-geodesic $\beta_k'$ such that $[\beta_k'] \in U_{o, k} \diagdown U_{o, R}(\xi)$.

The main step of the proof is the construction of an auxiliary quasi-geodesic $\beta$, which will be a path in the $1$-skeleton of $Y_0$, parametrized by unit speed on every edge. In particular, it will be determined completely by the ordered sequence of hyperplanes it crosses. We will write $\beta$ as the concatenation of paths $(\beta_l)_{l \in \mathbb{N}}$, where $\beta_l \subset F_l$. In what follows, whenever we consider a path $p : [a,b] \rightarrow Y_0$, we will denote $p(a)$ to be its starting point and $p(b)$ to be its endpoint.

Choose an integer $3 < \Delta < R$. Let $b_0' \in B$ be the unique hyperplane in $B$ that is not equal to $b_0$ and has minimal distance to $o$. Consider the geodesic starting at $o$ and crossing through $b_0'$ and extend this geodesic by $\Delta + 3$ many steps in the flat $F_1$. Denote this geodesic line by $p_1$. Define $q_1$ to be the geodesic from the endpoint of $p_1$ to its closest point projection onto $F_1 \cap F_2$. Note that this geodesic is completely contained in the $1$-skeleton of $Y_0$, as the endpoint of $p_1$ is separated from $F_1 \cap F_2$ only by hyperplanes in $C$. Denote $\beta_1 := p_1 * q_1$. Note that $\beta_1$ is a $(\sqrt{2}, 0)$-quasi-geodesic because the $l^1$-metric on the euclidean plane is $\sqrt{2}$-bi-Lipschitz-equivalent to the euclidean metric on the plane. Further, the endpoint of $\beta_1$ is separated from $\gamma$ by $\Delta + 3$-many hyperplanes that pairwise don't intersect. Therefore, the distance of the endpoint of $\beta_1$ to any point in $\gamma$ is at least $\Delta + 3$.

We now provide the general definition of $\beta_l$ by an inductive procedure. Suppose, we have defined $(\sqrt{2},0)$-quasi-geodesics $\beta_k \subset F_k$ for all $k < l$. Additionally, suppose that for all $1 < k < l$, the distance of any point in $\beta_k$ to any point in $\gamma$ is at least $\Delta$. Further, assume that for all $1 \leq k < l$, the endpoint of $\beta_k$ lies in $F_{k} \cap F_{k+1}$ and has distance at least $\Delta + 3$ from any point in $\gamma$.

Denote the endpoints of $\beta_k$ and $\gamma_k$ by $v_k$ and $w_k$ respectively for all $k$. Define
\[ M_l := d(v_{l-1}, F_l \cap F_{l+1}) = \# \{ \text{hyperplanes separating }v_{l-1} \text{ from } F_l \cap F_{l+1} \}\]

and
\[N_l := \max \left(\Delta + 3, 5 M_l, 2 \sum_{k < l} l(\beta_k) \right).\]

Note that this implies
\begin{equation} \label{eq:keyforN}
\frac{N_l}{2} - M_l \geq \frac{N_l}{4} + \frac{M_l}{8}.
\end{equation}

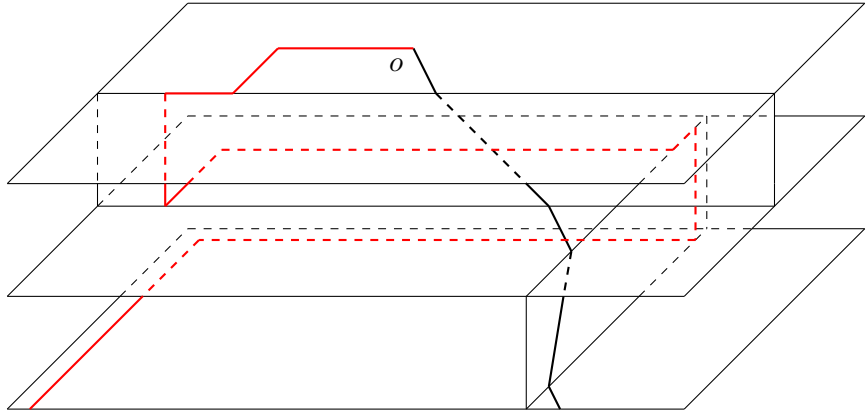
\begin{figure}
\begin{tikzpicture}[scale=1.50]
\draw [fill] (-3,0) -- (3,0);
\draw [fill] (-3,0) -- (-1.4, 1.6);
\draw [fill] (-1.4, 1.6) -- (4.6, 1.6);
\draw [fill] (3,0) -- (4.6, 1.6);
\draw [fill] (-2.2, 0.8) -- (3.8, 0.8);
\draw [dashed] (-2.2, 0.8) -- (-2.2, 0);
\draw [fill] (3.8, 0.8) -- (3.8, 0);
\draw [fill] (-2.2, 0) -- (-2.2, -0.2);
\draw [fill] (3.8, 0) -- (3.8, -0.2);
\draw [fill] (-2.2, -0.2) -- (3.8, -0.2);
\draw [fill] (-3, -1) -- (3,-1);
\draw [fill] (-3,-1) -- (-2.2, -0.2);
\draw [dashed] (-2.2, -0.2) -- (-1.4, 0.6);
\draw [dashed] (-1.4, 0.6) -- (3.8, 0.6);
\draw [fill] (3.8, 0.6) -- (4.6, 0.6);
\draw [fill] (3,-1) -- (4.6, 0.6);
\draw [fill] (1.6, -1) -- (2.6, 0);
\draw [dashed] (2.6, 0) -- (3.2, 0.6);
\draw [fill] (1.6, -1) -- (1.6, -2);
\draw [dashed] (3.2, 0.6) -- (3.2, -0.4);
\draw [fill] (1.6, -2) -- (2.6, -1);
\draw [dashed] (2.6, -1) -- (3.2, -0.4);
\draw [dashed] (-1.4, -0.4) -- (3.6, -0.4);
\draw [fill] (3.6, -0.4) -- (4.6, -0.4);
\draw [fill] (-3, -2) -- (-2, -1);
\draw [dashed] (-2, -1) -- (-1.4, -0.4);
\draw [fill] (3, -2) -- (4.6, -0.4);
\draw [fill] (-3, -2) -- (3, -2);
\draw [fill, thick] (0.6, 1.2) -- (0.8, 0.8);
\node [below left] at (0.6, 1.2) {$o$};
\draw [dashed, thick] (0.8, 0.8) -- (1.6, 0);
\draw [fill, thick] (1.6, 0) -- (1.8, -0.2);
\draw [fill, thick] (1.8, -0.2) -- (2, -0.6);
\draw [dashed, thick] (2, -0.6) -- (1.93, -1);
\draw [fill, thick] (1.93, -1) -- (1.8, -1.8);
\draw [fill, thick] (1.8, -1.8) -- (1.9, -2);
\draw [fill, thick, red] (0.6, 1.2) -- (-0.6, 1.2);
\draw [fill, thick, red] (-0.6, 1.2) -- (-1, 0.8);
\draw [fill, thick, red] (-1, 0.8) -- (-1.6, 0.8);
\draw [dashed, thick, red] (-1.6, 0.8) -- (-1.6, 0);
\draw [fill, thick, red] (-1.6, 0) -- (-1.6, -0.2);
\draw [fill, thick, red] (-1.6, -0.2) -- (-1.4, 0);
\draw [dashed, thick, red] (-1.4, 0) -- (-1.1, 0.3);
\draw [dashed, thick, red] (-1.1, 0.3) -- (2.9, 0.3);
\draw [dashed, thick, red] (2.9, 0.3) -- (3.1, 0.5);
\draw [dashed, thick, red] (3.1, 0.5) -- (3.1, -0.5);
\draw [dashed, thick, red] (3.1, -0.5) -- (-1.3, -0.5);
\draw [dashed, thick, red] (-1.3, -0.5) -- (-1.8, -1);
\draw [fill, thick, red] (-1.8, -1) -- (-2.8, -2);
\end{tikzpicture}
\caption{The concatenation of the quasi-geodesic $\beta_k$ (red) and the geodesic $\gamma$ (black). The scale of $\beta_k$ is reduced for presentation.}
\label{fig:path} 
\end{figure}

We distinguish between three cases:\\

Case 1: If $F_l \in \mathcal{A}$, then $F_{l-1} \in \mathcal{B}$. In this case, the $1$-dimensional cubically embedded line $F_{l-1} \cap F_l$ is only crossed by hyperplanes in $B$. Since $v_{l-1}$ and $w_{l-1}$ both lie in $F_{l-1} \cap F_l$, we conclude that they are only separated by hyperplanes in $B$. In particular, since the distance between them has to be at least $\Delta + 3$ by the assumption of the induction, there are at least $\Delta + 3$ many hyperplanes separating $v_{l-1}$ and $w_{l-1}$.

By construction, $\gamma_l$ crosses one hyperplane in $B$ and one hyperplane in $A$. Define $p_l$ to first cross the unique hyperplane in $B$ adjacent to $v_{l-1}$ that does not separate $v_{l-1}$ from $w_{l-1}$. Extend $p_l$ by $N_l - 1$ many steps in $F_l$. Define $q_l$ to be the path starting at the endpoint of $p_l$ and crossing the hyperplane in $A$ that is crossed by $\gamma_l$. Define $\beta_l := p_l * q_l$.

We now prove that $\beta_l$ satisfies the assumptions of the induction. Clearly, it is a $(\sqrt{2}, 0)$-quasi-geodesic in $F_l$. Further, since $\gamma$ crosses at most three hyperplane in every flat and $v_{l-1}$ and $w_{l-1}$ are separated by at least $\Delta + 3$ many hyperplanes, any point in $\beta_l$ is separated by at least $\Delta$ many hyperplanes in $B$ from any point in $\gamma$. Since no two hyperplanes in $B$ intersect, this implies that the distance between any point in $\beta_l$ and any point in $\gamma$ is at least $\Delta$. Further, the endpoint of $\beta_l$ is separated by $\Delta + 3$ many additional pairwise non-intersecting hyperplanes from any point in $\gamma$. We conclude that the distance of the endpoint of $\beta_l$ to any point in $\gamma$ is at least $\Delta + 3$.

Finally, we show that the endpoint of $\beta_l$ lies in $F_l \cap F_{l+1}$. Since $\beta_l$ starts in $F_l$ and crosses only hyperplanes that intersect with $F_l$, it lies in $F_l$. Given a vertex $v \in F_l$, it lies in $F_l \cap F_{l+1}$ if and only if there exists a point $w \in F_{l+1}$ such that no hyperplane in $A$ separates $v$ from $w$. This is the case for the endpoints $v_l$, $w_l$ of $\beta_l$ and $\gamma_l$ and we know that $w_l$ lies in $F_{l+1}$ by definition of $\gamma$ and $F_l$. We conclude that $\beta_l$ satisfies all the assumption of the induction.\\

Case 2: If $F_l \in \mathcal{C}$, then we do the same thing as in case 1, except that the roles of the sets $A$ and $B$ are played by the sets $D$ and $C$ respectively.\\

Case 3: If $F_l \in \mathcal{B}$, assume without loss of generality that $F_{l-1} \in \mathcal{A}$. Again, the case of $F_{l-1} \in \mathcal{C}$ can be obtained by role-swapping of $A$ with $D$ and of $B$ with $C$. We know that $v_{l-1}$, $w_{l-1} \in F_{l-1} \cap F_l$ and they are separated by at least $\Delta + 3$ many hyperplanes in $B$. By construction of $\gamma$, there is a unique hyperplane in $C$ that is adjacent to $w_{l-1}$ and is not crossed by $\gamma_l$. Define $p_l$ to start at $w_{l-1}$, to cross this unique hyperplane and extend it by $N_l - 1$ many steps in $F_l$. Define $q_l$ to start at the endpoint of $p_l$ and to cross all hyperplanes in $B$ that separate that endpoint from the line $F_l \cap F_{l+1}$. Note that these are all hyperplanes that separate the endpoint of $p_l$ from $F_l \cap F_{l+1}$, as all hyperplanes crossed by $p_l$ are also crossed by $F_l \cap F_{l+1}$. Define $\beta_l := p_l * q_l$.

Again, we show that $\beta_l$ satisfies all the assumption of the induction. It is a $(\sqrt{2},0)$-quasi-geodesic by construction. By the same argument as before, there are $\Delta$-many hyperplanes in $B$ that separate any point of $p_l$ from any point on $\gamma$. Furthermore, there are $N_l \geq \Delta + 3$ many hyperplanes that separate any point on $q_l$ from any point on $\gamma$, specifically, the hyperplanes crossed by $p_l$. 
We conclude that $\beta_l$ satisfies the assumption of the induction.\\

Note that for any $l > 1$, there is no hyperplane crossed by both $p_{l-1}$ and $p_l$. Together with the fact that $q_{l-1}$ never crosses hyperplanes of the same type as $p_{l-1}$ and $p_l$, this implies in particular that $p_{l-1} * q_{l-1} * p_l$ is a combinatorial geodesic.

\begin{claim}
The path $\beta$ is an $(8\sqrt{2},1)$-quasi-geodesic.
\end{claim}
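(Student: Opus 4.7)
The upper bound $d_Y(\beta(s),\beta(t)) \le t-s$ is immediate because $\beta$ is arc-length parametrised on the $1$-skeleton. The real content is the lower bound $t-s \le 8\sqrt{2}\cdot d_Y(\beta(s),\beta(t))+1$. Write $\beta(s)\in\beta_l$ and $\beta(t)\in\beta_m$ with $l\le m$. If $l=m$, both points lie in the single flat $F_l$ and the estimate follows from the already-established fact that $\beta_l$ is a $(\sqrt{2},0)$-quasi-geodesic in $F_l$.

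Assume therefore $l<m$. The pivotal ingredient is the growth condition $N_m\ge 2\sum_{k<m}l(\beta_k)$ baked into the definition of $N_m$. It bounds the total $\beta$-length from $\beta(s)$ back to the initial vertex $v_{m-1}$ of $\beta_m$ by $N_m/2$, giving
\[ d_\beta(\beta(s),\beta(t)) \le N_m/2 + l(\beta_m). \]
To lower-bound $d_Y(\beta(s),\beta(t))$, I exploit that by construction the segment $p_m$ is a straight combinatorial line inside the $2$-dimensional flat $F_m$ crossing $N_m$ pairwise parallel hyperplanes of a single letter-type; consequently $p_m$ is simultaneously an $\ell^1$-geodesic and a $\CAT$-geodesic of length $N_m$, so its endpoint lies at $\CAT$ distance $N_m$ from $v_{m-1}$. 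Writing $\tau$ for the $\beta$-parameter from $v_{m-1}$ to $\beta(t)$, when $\beta(t)\in p_m$ the triangle inequality gives $d_Y(\beta(s),\beta(t)) \ge \tau - N_m/2$, which combined with $d_\beta(\beta(s),\beta(t)) \le N_m/2+\tau$ yields the quasi-geodesic constant in the regime where $\tau$ is a definite fraction of $N_m$. In the complementary regime $\tau<N_m/2$, or when $\beta(t)\in q_m$, one instead uses the $(\sqrt{2},0)$-quasi-geodesic property of $\beta_m$ combined with the $\sqrt{2}$-bi-Lipschitz equivalence of the $\ell^1$ and $\CAT$ metrics on the $2$-dimensional flat; this is the source of the factor $\sqrt{2}$ in the final constant $8\sqrt{2}$.

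The principal obstacle is ruling out that $\beta$ re-crosses a hyperplane, since cancellations of that kind would spoil the above length estimates. Adjacent blocks are taken care of by the remark immediately preceding the claim: $p_{l-1}*q_{l-1}*p_l$ is a combinatorial geodesic, so its three pieces cross pairwise disjoint hyperplanes. For non-adjacent blocks I would argue by a letter-type bookkeeping, noting that each $p_k$ crosses only hyperplanes of a single letter-type prescribed by $F_k$, each $q_k$ crosses only hyperplanes of the complementary type available in $F_k$, and the visit pattern of $\gamma$ through the flat-classes $\mathcal{A},\mathcal{B},\mathcal{C}$ is precisely what prevents a hyperplane crossed by $p_k$ or $q_k$ from being crossed again by a later block; again the geometric growth of $N_m$ is what guarantees this on the side where short $q$-pieces might otherwise backtrack. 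Once re-crossings are excluded, the combinatorial length of $\beta\vert_{[s,t]}$ coincides with $\#\mathcal{W}(\beta(s),\beta(t))$, and the additive constant $1$ absorbs the at-most-unit overshoots when $\beta(s)$ or $\beta(t)$ sits partway across an edge.
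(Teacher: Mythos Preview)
Your overall strategy---use the growth condition $N_m \geq 2\sum_{k<m} l(\beta_k)$ to control the earlier part of $\beta$---is the same as the paper's, but the execution has two genuine gaps.

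First, the triangle-inequality argument in your second paragraph only works when $\tau$ is a definite fraction of $N_m$; you acknowledge this and defer the ``complementary regime $\tau < N_m/2$'' to the $(\sqrt{2},0)$-property of $\beta_m$, but that property concerns two points \emph{both} in $\beta_m$, whereas $\beta(s)\in\beta_l$ with $l<m$. There is no way to convert $d_Y(v_{m-1},\beta(t))\geq \tau/\sqrt{2}$ into a useful lower bound for $d_Y(\beta(s),\beta(t))$ once you have already spent $N_m/2$ on $d_Y(\beta(s),v_{m-1})$.

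Second, and more seriously, the claim that ``re-crossings are excluded'' is not established and is in fact false. Inspecting the periodic letter-type sequence $C,B,C,D,B,C,B,A$ one sees that, for instance, when $F_{l-1}\in\mathcal{A}$ and $F_l\in\mathcal{B}$, the piece $p_{l-1}$ crosses $B$-hyperplanes (moving \emph{away} from $\gamma$) and $q_l$ also crosses $B$-hyperplanes (moving \emph{back toward} $F_l\cap F_{l+1}$, hence toward $\gamma$); since $B$-hyperplanes pass through both $\mathcal{A}$- and $\mathcal{B}$-flats, $q_l$ genuinely re-crosses some of the hyperplanes crossed by $p_{l-1}$. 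This is precisely why the constant is $8\sqrt{2}$ rather than $\sqrt{2}$: if $\beta$ were a combinatorial geodesic as your last paragraph asserts, you would obtain a $(\sqrt{2},1)$-quasi-geodesic.

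The paper circumvents both problems by abandoning the triangle inequality in favour of a direct case analysis. It records the full letter-type period $C,B,C,D,B,C,B,A$ and uses it to verify that certain short concatenations ($p_{l-1}*q_{l-1}*p_l$ and $q_{l-1}*p_l*q_l$) are combinatorial geodesics. It then treats separately the cases $\beta(s)\in p_l$ versus $\beta(s)\in q_l$ and $\beta(t)\in p_{l-1}$ versus $\beta(t)\in q_{l-1}$ (and, for $k<l-1$, reduces to these). In the worst subcase (both points in backtracking pieces) it counts only the hyperplanes contributed by $p_l$, allowing up to $N_l/2 + M_l$ of them to be lost to re-crossing, and then invokes the second growth condition $N_l\geq 5M_l$ (your sketch never uses this) via the inequality $\tfrac{N_l}{2}-M_l\geq \tfrac{N_l}{4}+\tfrac{M_l}{8}$ to recover the constant $8$.
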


Before we prove the claim, we point out that $\beta$ is not contracting. We will construct suitable contracting paths $\beta_k'$ from it, once we have proven the claim.

\begin{proof}[Proof of Claim]
Let $0 \leq t < s$. Since $\beta$ is a concatenation of geodesic lines, we have $d(\beta(t),\beta(s)) \leq | s - t |$. We are left to show that $d(\beta(t), \beta(s)) \geq \frac{1}{8\sqrt{2}} | s - tÊ| - 1$. Suppose, $\beta(s) \in F_l$. Then $\beta(t) \in F_k$ for some $k \leq l$. There are three cases.\\

Case 1: If $k=l$, we find $t'$, $s'$ such that $t-s = t'-s'$ and $\beta(t) = \beta_l(t')$, $\beta(s) = \beta_l(s')$. We have already seen that $\beta_l$ is a $(\sqrt{2}, 0)$-quasi-geodesic by construction.

Case 2: Suppose $k = l-1$. We first note that $\beta$ can be written as a concatenation of geodesic lines $p_l$, $q_l$ in the $1$-skeleton of $Y_0$. Each of these geodesic lines crosses hyperplanes from only one of the four families $A$, $B$, $C$, $D$. If we write down for every geodesic line, which type of hyperplanes it crosses, in order of concatenation, we get a periodic sequence with period
\[ C, B, C, D, B, C, B, A. \]
In other words, $p_1$ crosses hyperplanes in $C$, $q_1$ crosses hyperplanes in $B$, $p_2$ crosses hyperplanes in $C$, $q_2$ crosses hyperplanes in $D$, etc. following the periodic sequence described above.

We will distinguish between several subcases.

Case 2a: Suppose, $\beta(s) \in p_l$ and $\beta(t) \in q_{l-1}$. By construction, the concatenation $q_{l-1} * p_l$ is either a geodesic, or contained inside the $1$-skeleton of the flat $F_{l-1}$, hence a $(\sqrt{2},0)$-quasi-geodesic. 

Case 2b: Suppose, $\beta(s) \in p_l$ and $\beta(t) \in p_{l-1}$. As we noted before stating the claim, the concatenation $p_{l-1} * q_{l-1} * p_l$ is a combinatorial geodesic for all $l > 1$. Thus, we have

\begin{equation*}
\begin{split}
d(\beta(s), \beta(t)) & \geq \frac{1}{\sqrt{2}} d_{l^1}(\beta(s), \beta(t))\\
& = \frac{1}{\sqrt{2}} | s - t |.
\end{split}
\end{equation*}

Case 2c: Suppose, $\beta(s) \in q_l$ and $\beta(t) \in q_{l-1}$. By checking the type of hyperplanes crossed by $q_{l-1}$, $p_l$ and $q_l$, we see that $q_{l-1} * p_l * q_l$ is a combinatorial geodesic and thus a $(\sqrt{2}, 0)$-quasi-geodesic by the same argument as in Case 2b.

Case 2d: Suppose, $\beta(s) \in q_l$ and $\beta(t) \in p_{l-1}$. By definition of $N_l$, we know that $p_l$ crosses at least twice as many hyperplanes as all $\beta_k$ for $k < l$ together. We conclude that $\beta(s)$ is separated from $\beta(t)$ by at least $\frac{N_l}{2} - M_l$ many hyperplanes and $| s - t | \leq 2N_l + M_l$. Therefore, using equation (\ref{eq:keyforN}), we have

\begin{equation*}
\begin{split}
d(\beta(s), \beta(t)) & \geq \frac{1}{\sqrt{2}} d_{l^1}(\beta(s), \beta(t))\\
& \geq \frac{1}{\sqrt{2}} \left( \frac{N_l}{2} - M_l \right)\\
& \geq \frac{1}{8\sqrt{2}} \left( 2N_l + M_l \right)\\ 
& \geq \frac{1}{8\sqrt{2}} | s - t |.
\end{split}
\end{equation*}

Combining cases 2a-d, we conclude that $p_{l-1} * q_{l-1} * p_l * q_l$ is a $(8\sqrt{2},0)$-quasi-geodesic.

Case 3: Suppose $k < l-1$. If $\beta(s) \in q_l$, the argument from Case 2d applies and thus, the $(8\sqrt{2},0)$-quasi-geodesic inequalities are satisfied.

If $\beta(s) \in p_l$, let $\beta(T)$ be the end point of $p_{l-1}$. Clearly, $\beta(s)$ and $\beta(T)$ are separated by at least $s - T - \frac{1}{2}$ many hyperplanes. Furthermore, $p_{l-1}$ provides at least $\frac{N_{l-1}}{2}$ many additional hyperplanes that separate $\beta(s)$ from any $\beta(t) \in \beta_k$ for $k < l-1$, as $p_{l-1}$, $q_{l-1}$ and $p_l$ cross mutually disjoint families of hyperplanes. Using the fact that $| T - t | \leq 2N_{l-1}$ by definition of $N_{l-1}$, we conclude that

\begin{equation*}
\begin{split}
d(\beta(s), \beta(t)) & \geq \frac{1}{\sqrt{2}} d_{l^1}(\beta(s), \beta(t))\\
& \geq \frac{1}{\sqrt{2}} \left( \frac{N_{l-1}}{2} + s - T - \frac{1}{2}  \right)\\
& \geq \frac{1}{8\sqrt{2}} \left| T - t + s - T - \frac{1}{2} \right|\\
& = \frac{1}{8\sqrt{2}} |s - t| - \frac{1}{16\sqrt{2}}.
\end{split}
\end{equation*}

We conclude that $\beta$ satisfies the $(8\sqrt{2},1)$-quasi-geodesic inequalities for any $0 \leq t \leq s$, which concludes the proof of the claim
\end{proof}

We now return to our open neighbourhood $U_{o, k}$ of $\xi$. Since every hyperplane in $Y_0$ is contained in one block of $Y_0$, it can either intersect flats in $\mathcal{A} \cup \mathcal{B}$ or in $\mathcal{B} \cup \mathcal{C}$. Without loss of generality, $k$ only intersects flats in $\mathcal{A} \cup \mathcal{B}$. Since $\gamma$ crosses infinitely many flats of all three types, there has to be an $L$, such that $k$ separates $o$ from $F_L$. In particular, the concatenation $\beta_1 * \dots * \beta_L$ crosses $k$. We can extend this concatenation to a contracting $(8\sqrt{2},1)$-quasi-geodesic $\beta_k'$, which provides us with a point $\eta_k := [\beta_k'] \in \partial_M Y_0$. Note that $[\beta_k'] \neq [\beta]$, in fact, $\beta$ is not a contracting quasi-geodesic. However, $\beta_k'$ is a quasi-geodesic with the property that $\beta_k' \cap N_{\Delta}(\gamma \diagdown B_R(o)) = \emptyset$ since $R > \Delta$. If we choose $\Delta > \kappa(\rho_{\xi}, 8\sqrt{2}, 1)$, we conclude that $\eta_k \in U_{o, k} \diagdown U_{o, R}(\xi)$. In summary: Given $R \gg \kappa(\rho_{\xi}, 8\sqrt{2},1)$ and any hyperplane $k$, we have found a point $\eta_k = [\beta_k'] \in U_{o, k} \diagdown U_{o, R}(\xi)$. This implies that $\Hyp \neq \FQ$, which completes the proof of the proposition.
\end{proof}

\begin{rem}
Note that we can chose $\Delta$ rather freely in our construction. In particular, it is not possible to adapt the number $\kappa(\rho, K, C)$ in Cashen-Mackays definition of fellow-traveling to make the constructed paths $\beta_k'$ fellow-travel along $\gamma$.
\end{rem}

\begin{rem}
The construction in the proof of Proposition \ref{prop:quasi-geodesic topology} can be done for most points in the Morse boundary of $Y_0$, although the construction becomes more messy, as the geodesic $\gamma$ becomes more complicated. We see that $\FQ$ and $\FG$ provide different open neighbourhoods around nearly every point of $\partial_M Y_0$.
\end{rem}




\section{Defining a metric on the Morse boundary of Right-angled Artin groups} \label{sec:Metrizing}

Let $\Gamma$ be a finite graph with no multiple edges and no self-loops for the remainder of this section. Denote by $A_{\Gamma}$ the induced RAAG and by $Y_{\Gamma}$ the universal covering of the Salvetti complex. More generally, let $Y$ be a locally finite $\CAT$ cube complex such that for every Morse geodesic ray $\gamma$, there exists $rÊ\geq 0$ and an infinite family of strongly separated hyperplanes $h_i$ crossed by $\gamma$, such that for $p_i := \gamma \cap h_i$ we have $d(p_i, p_{i+1}) \leq r$. We say that \emph{all Morse geodesic rays cross an infinite sequence of strongly separated hyperplanes}.

We will show that, whenever $Y$ satisfies this property, the topology $\Hyp$ on its Morse boundary is metrizable and has a nice description. The metric will depend on the choice of a base point $o$, so we will obtain a family of metrics $d_o$. In fact, these metrics even induce different cross ratios on the Morse boundary, as we will see further below.

Before we define the actual metric, recall the following fact, which is an essential part of the proof of the Urysohn metrization theorem (cf. \cite{Royden}).

Let $(Y, \mathcal{T})$ be a topological space, $\{ U_n \}$ a countable basis of $\mathcal{T}$ and $f_n : Y \rightarrow [0, e^{-n}]$ continuous maps such that $\supp(f_n) \subset U_n$ and for every $y \in Y$, $U \in \mathcal{T}$ with $y \in U$ there exists some $f_n$ with $f_n(y) \neq 0$ and $\supp(f_n) \subset U$. Then the map

\[ d(x,y) := \sup_n( | f_n(x) - f_n(y) |) \]

is a metric and its induced topology is equal to $\mathcal{T}$. In fact, we can even have finitely many sets $U_{n,1}, \dots, U_{n, l_n}$ and finitely many functions $f_{n, 1}, \dots f_{n, l_n} : Y \rightarrow [0, e^{-n}]$ with the properties above and the construction still yields a metric that induces $\mathcal{T}$. We will explicitly choose the sets $U_n$ and construct functions $f_n$. We start by proving certain properties that will justify our choice.

\begin{lem} \label{lem:AuxiliaryforMetrization}
If $Y$ is a locally finite $\CAT$ cube complex and all Morse geodesic rays in $Y$ cross an infinite sequence of strongly separated hyperplanes, then the family $\{ U_{o, k} \}_{k \in \mathcal{W}(Y)}$ is a countable basis of $\Hyp$. Furthermore, $U_{o, k}$ is open and closed in $\Hyp$.
\end{lem}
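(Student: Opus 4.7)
The plan is to verify the four assertions (countability, openness of each $U_{o,k}$, the basis property, and closedness of each $U_{o,k}$) separately, with closedness being the only substantive step.

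Countability will be immediate from local finiteness: $Y$ has only countably many cubes and hence only countably many hyperplanes. Openness of each $U_{o,k}$ will be automatic since $U_{o,k}$ coincides with the basic open set of $\Hyp$ indexed by the one-element family $\{k\}$.

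For the basis property, given $\xi \in U_{o, h_1, \ldots, h_n}$ with unique geodesic representative $\gamma$ from $o$, I will apply the hypothesis to extract a pairwise strongly separated sequence $(k_j)_j$ of hyperplanes crossed by $\gamma$. Pairwise strong separation bounds by one the number of $k_j$ intersecting any fixed $h_i$, so for $j$ large enough $k_j$ will be disjoint from every $h_i$ and crossed by $\gamma$ only after all of them. Since $\gamma(t_i) \in h_i \cap k_j^-$ and $\gamma(s_j) \in k_j \cap h_i^+$, the connectedness of each hyperplane forces $h_i \subset k_j^-$ and $k_j \subset h_i^+$. Inspecting the four halfspace intersections $h_i^\pm \cap k_j^\pm$, the three witnesses $o \in h_i^- \cap k_j^-$, $\gamma(t)$ for $t_i < t < s_j$ in $h_i^+ \cap k_j^-$, and $\gamma(t)$ for $t > s_j$ in $h_i^+ \cap k_j^+$ make three of them non-empty; since $h_i$ and $k_j$ are disjoint exactly one intersection is empty, which must be $h_i^- \cap k_j^+$. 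Hence $k_j^+ \subset h_i^+$, so any geodesic from $o$ crossing $k_j$ necessarily crosses every $h_i$, giving $\xi \in U_{o, k_j} \subset U_{o, h_1, \ldots, h_n}$.

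The main obstacle is closedness. Given $\xi \notin U_{o, k}$ with $\gamma$ not crossing $k$, I will produce some $k' = k_j$ from the strongly separated sequence for $\gamma$ such that $U_{o, k'}$ is a $\Hyp$-neighborhood of $\xi$ disjoint from $U_{o, k}$. For $j$ large, $k \cap k_j = \emptyset$ by pairwise strong separation, and since $\gamma(s_j) \in k_j \cap k^-$ and $k_j$ is connected, $k_j \subset k^-$. The delicate step is to show $k \subset k_j^-$ for large $j$: I will fix any point $p$ in the interior of an edge crossed by $k$ and note that $p$ is separated from $o$ by only finitely many hyperplanes (local finiteness, with this count bounded by $d_{\ell^1}(o, v)$ for an endpoint $v$ of that edge), so $p \in k_j^-$ for all sufficiently large $j$; connectedness of $k$ then upgrades this to $k \subset k_j^-$. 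With both containments in hand, three of the four intersections $k^\pm \cap k_j^\pm$ are non-empty (via $o$, via $\gamma(t)$ for $t > s_j$, and via a vertex adjacent to $k$ on its far side, which lies in $k_j^-$ because the edge crossing $k$ is not crossed by $k_j$), so the empty intersection must be $k^+ \cap k_j^+$. Any $\eta \in U_{o, k} \cap U_{o, k_j}$ would then have its geodesic from $o$ eventually inside $k^+ \cap k_j^+$, a contradiction. The hardest part of this argument will be the containment $k \subset k_j^-$, which is precisely where the combination of local finiteness (to bound separating hyperplanes of a point) and connectedness of hyperplanes (to propagate from one point of $k$ to all of $k$) is used.
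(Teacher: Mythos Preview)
Your proposal is correct and follows the same overall strategy as the paper: exploit the hypothesised sequence of pairwise strongly separated hyperplanes crossed by $\gamma$ to produce a single hyperplane $k_j$ witnessing the basis property (respectively, a single $k_j$ with $U_{o,k_j}\cap U_{o,k}=\emptyset$ for closedness).

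Where your argument differs is in the closedness step, and there your extra care is warranted. The paper observes that at most one member $k_{i_0}$ of the sequence can intersect $k$, takes $k_I:=k_{i_0+1}$, and in the case where \emph{no} $k_i$ intersects $k$ simply says ``pick any $k_I$'', asserting $U_{o,k_I}\subset \partial_M Y\setminus U_{o,k}$ without further justification. When some $k_{i_0}$ does cross $k$ this works (the intersection point places $k$ inside $k_{i_0}\subset k_I^{-}$), but ``pick any'' in the other case is not enough: in a tree, with $k$ on a side branch leaving $\gamma$ at distance~$10$ and $k_I$ equal to the very first edge of $\gamma$, one has $k^{+}\cap k_I^{+}\neq\emptyset$ and $U_{o,k}\cap U_{o,k_I}\neq\emptyset$. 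What is missing is precisely your ``delicate step'' $k\subset k_j^{-}$ for all sufficiently large $j$, which you obtain from local finiteness (only finitely many hyperplanes separate $o$ from a fixed edge dual to $k$) together with connectedness of $k$. Once that is in hand, your halfspace bookkeeping forces $k^{+}\cap k_j^{+}=\emptyset$ and the conclusion follows. So your version is a tightened form of the paper's argument rather than a different approach.
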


\begin{proof}
The countability of $\{ U_{o, k} \}_k$ follows from the local finiteness of $Y$. Next, we will show that $\{ U_{o, k} \}_k$ is a basis. Let $h_1, \dots, h_n$ be a collection of hyperplanes such that $U_{o, h_1, \dots, h_n} \neq \emptyset$. Let $\xi \in U_{o, h_1, \dots, h_n}$ and $\gamma \in \xi$ a geodesic representative based at $o$. We find a sequence of strongly separated hyperplanes $(k_i)_i$ that are crossed by $\gamma$. Consider the first hyperplane $k_i$ that is crossed by $\gamma$ after it has crossed all $h_j$. Since all $k_i$ are strongly separated, $k_{i+1}$ cannot cross $h_j$ for any $j$ and thus, $\xi \in U_{o, k_{i+1}} \subset U_{o, h_1, \dots, h_n}$. We conclude that $\{ U_{o, k} \}_k$ is a basis.

Finally, we show that $U_{o, k}$ is closed in $\Hyp$. Let $\xi \in \partial_M Y \diagdown U_{o, k}$ and $\gamma \in \xi$ the geodesic representative based at $o$. There exists a sequence of strongly separated hyperplanes $(k_i)_i$ that is crossed by $\gamma$. Since $\gamma$ does not cross $k$, there can be at most one $k_i$ that crosses $k$. Let $k_I$ be the first hyperplane in $(k_i)_i$ after the one that crosses $k$ (pick any $k_I$ if no $k_i$ crosses $k$). We have that $\xi \in U_{o, k_I} \subset \partial_M Y \diagdown U_{o, k}$. We conclude that $U_{o, k}$ is closed.
\end{proof}

Lemma \ref{lem:AuxiliaryforMetrization} allows us to make the construction from Urysohns metrization theorem very explicit. We can choose $\{ U_{o, k} \}_k$ as our countable basis and define 

\begin{equation*}
f_k(\xi) := \begin{cases} e^{- \# \mathcal{W}(o, k)} & \text{ if $\xi \in U_{o, k}$},\\
0 & \text{ else}
\end{cases}
\end{equation*}

Recall that $\mathcal{W}(o, k)$ is the set of hyperplanes that separates $o$ from $k$. Then
\[ d_o(\xi, \eta) := \sup_k( | f_k(\xi) - f_k(\eta) | ) \]

is a metric and its induced topology is equal to the topology generated by $\{ U_{o, k} \}_k$, i.\,e. $\Hyp$.

There is a more convenient way to describe $d_o$. Let $\xi$, $\eta \in \partial_M Y$, $o \in Y^{(0)}$. Since $f_k(\xi) \in \{ 0, e^{- \# \mathcal{W}(o, k)} \}$, we see that $f_k(\xi) - f_k(\eta) = 0$ unless one of the two lies in $U_{o, k}$ and the other one does not. Therefore, if we define
\[ \mathcal{W}(\xi, \eta) := \{ h \in \mathcal{W}(Y) \vert \text{ Exactly one geodesic representative of $\xi$ and $\eta$ based at $o$ crosses $h$} \}, \]

we have
\[ d_o(\xi, \eta) = e^{- \inf \{ \# \mathcal{W}(o,k) | k \in \mathcal{W}(\xi, \eta) \}}. \]

To abbreviate, we define
\[ [\xi | \eta ]_o := \inf( \#\mathcal{W}(o,k) | k \in \mathcal{W}(\xi, \eta)) \]

and obtain
\[ d_o(\xi, \eta) = e^{-[\xi | \eta]_o}. \]

We use the expression $[\xi | \eta]_o$ as an analogue of the Gromov product in hyperbolic spaces. Note that $[\xi | \eta]_o$ is very different from the product $(\xi | \eta)_o := \# \mathcal{W}(o, \{ \xi, \eta \})$ which was used in \cite{BeyFioMedici}.

We define the cross ratio of four points $w, x, y, z$ with respect to $[\cdot | \cdot]_o$ by
\[ cr_o(w,x,y,z) := [w | x]_o + [y | z]_o - [wÊ| y]_o - [x | z]_o. \]

In addition, we define
\[ [w,x,y,z] := (w | x)_o + (y | z)_o - (wÊ| y)_o - (x | z)_o, \]

which is the base point-independent cross ratio introduced in \cite{BeyFioMedici}. Cross ratios often appear on boundaries in a very natural way (cf. \cite{Paulin} and \cite{BeyFioMedici}). One particular desirable feature of cross ratios on boundaries is that they should be independent of the choice of any base point (in contrast to the construction of metrics). The example below shows us that:

\begin{enumerate}

\item[1)] The difference $\vert cr_o(\cdot, \cdot, \cdot, \cdot) - [\cdot, \cdot, \cdot, \cdot] \vert$ is unbounded. 

\item[2)] The cross ratio $cr_o$ is not independent of the choice of $o$.

\end{enumerate}

\begin{exam}

Consider the `tree of $3$-dimensional flats' that corresponds to the RAAG
\[ \mathbb{Z}^3 * \mathbb{Z} = \langle a, b, c, d | [a,b] = [b,c] = [a,c] = 1 \rangle . \]

Denote the Salvetti complex that belongs to the graph $\Gamma = K_3 \cup \{ d \}$ by $Y$. The Cayley-graph of the representation given above can be embedded into $Y$ by an embedding that respects the cube complex structure. Let $o$ be the image of $1 \in \mathbb{Z}^3 * \mathbb{Z}$ under a chosen embedding. The choice of $o$ allows us to represent elements in the visual boundary by infinite words in $a$, $b$, $c$, $d$ and their inverses. Put

\[ w_n := a^n d^{\infty}, \]
\[ x_n := a^n b d^{\infty}, \]
\[ y := a^{-1} b^{-1} d^{\infty}, \]
\[ z := a^{-1} b^{-1} c d^{\infty}. \]

Clearly,

\[ [w_n | x_n]_1 = [w_n | y]_1 = [w_n | z ]_1 = [x_n | z]_1 = [y | z]_1 = 0, \]

while

\[ (w_n | x_n)_1 = n \]
\[ (w_n | y)_1 = (w_n | z )_1 = (x_n | z)_1 = (y | z)_1 = 0. \]

We see that

\[ cr_1(w_n, x_n, y, z) = 0 \]
\[ [w_n, x_n, y, z] = n. \]

This proves 1). We see that the metric $d_o$ provides us with a cross ratio that is very different from the cross ratio introduced in \cite{BeyFioMedici}. Furthermore, changing the base point also changes the cross ratio $cr_o$. For example,
\[ cr_{c^{-m}}(w_n, x_n, y, z) = 0 + m - 0 - 0 = m \neq cr_1(w_n, x_n, y, z). \]

This proves 2).

\end{exam}




\section{$\Hyp$ and the visual topology} \label{sec:VisualTopology}

In this section, we will connect the topologies $\Hyp$ and $\FG$ with the visual topology on the Morse boundary of $\CAT$ cube complexes and the quotient topology coming from the Roller boundary. All together, this will provide us with a new way to tackle the question whether the subspace topology of the visual topology on the Morse boundary is a quasi-isometry-invariant. In particular, our understanding of various cubulable groups provides us with many examples to study this question with. We start by noting a result by Cashen-Mackay and one by Beyrer-Fioravanti.

\begin{lem}[Proposition 7.3 from \cite{CashenMackay}] \label{lem:CashenMackay}
Let $Y$ be a $\CAT$ space. Then $\FG$ agrees with the subspace topology induced by the visual topology on $\partial_M Y$.
\end{lem}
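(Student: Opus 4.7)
The plan is to show that each basic neighbourhood in one topology contains a basic neighbourhood of the other, at every $\xi \in \partial_M Y$. Two features of the $\CAT$ setting make the comparison clean: (i) uniqueness of the geodesic ray from $o$ to $\xi$ collapses the ``all geodesic representatives'' clause in the definition of $V_{o,R}(\xi)$ to a single ray $\gamma$; and (ii) for any two geodesic rays $\gamma, \beta$ based at $o$, the function $f(t) := d(\gamma(t), \beta(t))$ is convex with $f(0) = 0$, and therefore non-decreasing. Throughout, let $\rho$ be a sublinear function with respect to which $\gamma$ is $\rho$-contracting, and set $\kappa := \kappa(\rho, 1, 0)$.

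For the ``visual refines $\FG$'' direction, given a basic $\FG$-neighbourhood $V_{o, R}(\xi)$ choose any $R' > R$ and $\epsilon \leq \kappa$. If $\eta \in \partial_M Y$ has geodesic representative $\beta$ from $o$ satisfying $d(\gamma(R'), \beta(R')) < \epsilon$, then $\beta(R')$ lies within $\kappa$ of $\gamma(R')$, and $\gamma(R') \in \gamma \setminus B_R(o)$ because $R' > R$. Hence $\eta \in V_{o, R}(\xi)$, so the basic visual neighbourhood picked out by $R'$ and $\epsilon$ is contained in $V_{o, R}(\xi)$.

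For the reverse direction, given a basic visual neighbourhood of the form $\{\eta : d(\gamma(R'), \beta(R')) < \epsilon\}$ (with $\epsilon < 2\kappa$, which is harmless), I would choose $R$ satisfying $R > 2R'\kappa / \epsilon + \kappa$. Suppose $\eta \in V_{o, R}(\xi)$, realised by points $\beta(t_0)$ and $\gamma(s_0)$ with $s_0 \geq R$ and $d(\beta(t_0), \gamma(s_0)) \leq \kappa$. Since $\beta, \gamma$ are unit-speed rays from $o$, the triangle inequality gives $|t_0 - s_0| \leq \kappa$, so $d(\beta(t_0), \gamma(t_0)) \leq 2\kappa$ and in particular $t_0 \geq s_0 - \kappa \geq R - \kappa > R'$. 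Applying convexity of $f$ with $f(0) = 0$ at the point $R' \in [0, t_0]$ gives
\[ d(\gamma(R'), \beta(R')) = f(R') \leq \frac{R'}{t_0} f(t_0) \leq \frac{R'}{t_0} \cdot 2\kappa < \epsilon, \]
by our choice of $R$. Hence $\eta$ lies in the specified visual neighbourhood, proving $V_{o, R}(\xi) \subseteq \{\eta : d(\gamma(R'), \beta(R')) < \epsilon\}$.

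The only delicate step is the reverse direction: one must choose $R$ large enough, as a function of $R'$, $\epsilon$, and the contraction data of $\xi$, to push the approximate fellow-travel time $t_0$ far enough out that $\CAT$ convexity scales the $2\kappa$ gap at time $t_0$ down below $\epsilon$ at time $R'$. Once this bookkeeping is in place, both inclusions are immediate, and $\FG$ and the subspace visual topology coincide on $\partial_M Y$.
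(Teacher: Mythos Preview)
Your argument is correct. Both directions are handled cleanly: in the first you observe that a visual $\epsilon$-closeness at time $R'>R$ with $\epsilon\le\kappa$ immediately witnesses the fellow-travelling condition, and in the second you use convexity of $t\mapsto d(\gamma(t),\beta(t))$ together with the bound $f(t_0)\le 2\kappa$ and $t_0\ge R-\kappa$ to scale down to the required $\epsilon$ at time $R'$. The bookkeeping (the assumption $\epsilon<2\kappa$ to guarantee $t_0>R'$, and the choice $R>2R'\kappa/\epsilon+\kappa$) is correct.

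There is nothing to compare against in this paper, however: the lemma is simply quoted as Proposition~7.3 of \cite{CashenMackay} and is not reproved here. What you have written is essentially the argument Cashen and Mackay give (uniqueness of $\CAT$ geodesic rays plus convexity of the distance function), so your approach is the standard one rather than a genuinely different route.
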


If $Y$ is a locally finite $\CAT$ cube complex, the Roller boundary also induces a topology on $\partial_M Y$. Specifically, consider the projection map $\Phi : \partial_{R, M} Y \rightarrow \partial_{M} Y$ introduced in section \ref{subsec:CCCs}. The Roller boundary induces a subspace topology on $\partial_{R, M} Y$. The following result is part of Theorem 3.10 in \cite{BeyFio}.

\begin{thm}[\cite{BeyFio}] \label{thm:RollerTopology}
Let $Y$ be a uniformly locally finite $\CAT$ cube complex and equip $\partial_M Y$ with the visual topology. Then the map $\Phi : \partial_{R, M} Y \rightarrow \partial_M Y$ is surjective and continuous.

In particular, the map $\phi : \partial_{R, M} Y \diagup \sim \; \rightarrow \partial_M Y$ where asymptotic points in $\partial_{R, M} Y$ are identified and the quotient is equipped with the quotient topology, is a homeomorphism.
\end{thm}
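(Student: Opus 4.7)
The plan is to establish the three assertions in sequence: surjectivity of $\Phi$, continuity of $\Phi$, and then the homeomorphism property for $\phi$. I fix a vertex $o \in Y$ as a base point throughout.

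For surjectivity, given $\xi \in \partial_M Y$ with $\CAT$ geodesic representative $\gamma$ based at $o$, I produce a combinatorial geodesic representative by approximation. For each $n \in \N$, choose a vertex $v_n$ closest to $\gamma(n)$ and let $\alpha_n$ be a combinatorial geodesic from $o$ to $v_n$. Uniform local finiteness gives only finitely many edges out of each vertex, so an Arzelà--Ascoli-style diagonal extraction produces a limit combinatorial geodesic ray $\alpha$ based at $o$. The bi-Lipschitz equivalence of the $\CAT$ and combinatorial metrics forces $\alpha$ to lie within bounded Hausdorff distance of $\gamma$, so $\alpha$ is Morse as a $(\sqrt{n},0)$-quasi-geodesic. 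It determines a point $x \in \partial_{R,M} Y$ with $\Phi(x) = \xi$.

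For continuity, suppose $x_n \to x$ in $\partial_{R,M} Y \subset \partial_R Y$ and fix a combinatorial geodesic representative $\alpha$ of $x$ starting at $o$. Roller convergence says the ultrafilters $\sigma_{x_n}$ eventually agree with $\sigma_x$ on any finite set of hyperplanes. Given $R > 0$, only finitely many hyperplanes meet the combinatorial ball of radius $R$ around $o$, so for $n$ large the ultrafilter $\sigma_{x_n}$ agrees with $\sigma_x$ on every hyperplane crossed by $\alpha|_{[0,R]}$. Using the DCC property of the ultrafilters to pick compatible crossing orders, one selects combinatorial geodesic representatives $\alpha_n$ of $x_n$ that coincide with $\alpha$ on $[0,R]$. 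Since the $\CAT$ geodesic representative of $\Phi(x_n)$ lies within bounded Hausdorff distance of $\alpha_n$ (the bound depending only on $\dim Y$), the $\CAT$ representatives converge uniformly on compact sets, i.\,e.\,$\Phi(x_n) \to \Phi(x)$ in the visual topology.

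For the homeomorphism, first $\Phi$ is constant on $\sim$-classes: if $x \sim y$ lie in the same Roller component, they differ on only finitely many hyperplanes, so their combinatorial representatives have finite Hausdorff distance and represent the same Morse boundary point. Conversely, the Remark following Theorem \ref{thm:CharneySultan} identifies $\Phi^{-1}(\xi)$ as a single bounded component of $\partial_R Y$, hence a single $\sim$-class. Thus $\Phi$ descends to a continuous bijection $\phi : \partial_{R,M} Y / \sim \;\to \partial_M Y$. To upgrade $\phi$ to a homeomorphism, I would verify openness directly: basis sets in the quotient topology come from Roller cylinders defined by the orientations of finitely many hyperplanes, and I would transfer these to visual-open neighbourhoods of $\phi(x)$ by running the approximation in the surjectivity argument in reverse, producing CAT representatives whose initial segments are forced to cross the prescribed hyperplanes.

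The main obstacle I anticipate is in the continuity step, specifically ensuring that the Hausdorff distance bound between combinatorial and $\CAT$ representatives stays uniform along the sequence $(x_n)$. This requires the Morse/contracting parameters of $\Phi(x_n)$ to remain uniformly controlled near $\xi$, which one expects from the Charney--Sultan characterisation applied to the eventually agreeing strongly separated hyperplane sequence near $o$, but which demands a careful combinatorial argument to make uniform.
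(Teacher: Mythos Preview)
The paper does not contain a proof of this theorem. The statement is quoted as ``part of Theorem 3.10 in \cite{BeyFio}'' and is used as a black box in the derivation of Corollary \ref{cor:Corollary1}; there is nothing in the present paper to compare your argument against.

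That said, a few comments on your sketch on its own merits. The surjectivity and fibre-identification parts are fine and essentially standard. In the continuity step, however, the claim that one can choose combinatorial representatives $\alpha_n$ of $x_n$ that \emph{coincide} with $\alpha$ on $[0,R]$ is too strong: agreement of the ultrafilters on the hyperplanes meeting $B_R(o)$ only tells you that $\alpha_n$ and $\alpha$ cross the same hyperplanes in that ball, not that they traverse the same edge path. What you actually get is that $\alpha_n$ and $\alpha$ stay within a dimension bound of each other on $[0,R]$, which is still enough. More seriously, the concern you raise at the end is the real gap: the Hausdorff distance between a combinatorial ray and the $\CAT$ representative of the same boundary point is controlled by the Morse gauge of that point, not just by $\dim Y$. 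So to conclude visual convergence you do need uniform control of the contracting constants of $\Phi(x_n)$ near $\Phi(x)$, and this is exactly where the Charney--Sultan strongly separated chain for $\alpha$ (which the $\alpha_n$ eventually share an initial segment of) must be invoked. Your outline acknowledges this but does not carry it out; in \cite{BeyFio} this is handled by working with the uniform separation constants along the shared chain.
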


The topology on the Roller boundary used in this Theorem is connected to several rigidity results (cf. \cite{BeyFioMedici}, \cite{BeyFio2}). However, these rigidity results use the cross ratio $[\cdot, \cdot, \cdot, \cdot]$, which is different from $cr_o$, as we have seen in section \ref{sec:Metrizing}.

Combining Theorem \ref{thm:TheoremA}, Lemma \ref{lem:CashenMackay} and Theorem \ref{thm:RollerTopology}, we obtain Corollary \ref{cor:Corollary1}. This gives us a new approach to tackle the question, whether the Morse boundary with the visual topology is a quasi-isometry invariant for uniformly locally finite $\CAT$ cube complexes that admit a cocompact action by isometries. For spaces, where $\FG = \FQ$, this follows from the quasi-isometry invariance of $\FQ$. Theorem \ref{thm:TheoremB} provides an example, where this is not the case (and suggests the existence of many others). We finish by presenting the naive attempt to prove quasi-isometry invariance of the topology $\Hyp$ and by illustrating an obstruction to this invariance.

Let $Y$, $Y'$ be two $\CAT$ cube complexes with dimension uniformly bounded by $n$, $X$ and $X'$ their respective Morse boundaries, $F : Y \rightarrow Y'$ a $(K,C)$-quasi-isometry between them and $f : X \rightarrow X'$ the induced bijection of the Morse boundaries. Pick a base point $o \in Y$ and let $o'$ be a vertex closest to $F(o)$. Let $k'$ be a hyperplane in $Y'$ inducing a open set $U_{o', k'} \subset X'$ and suppose, $f(\xi) = \zeta \in U_{o',k'}$. To prove continuity of $f$, we need to find hyperplanes $h_1, \dots, h_l$ such that $\xi \in U_{o, h_1, \dots, h_l}$ and $f(U_{o, h_1, \dots, h_l}) \subset U_{o', k'}$.

Consider the unique geodesic representative $\gamma$ of $\xi$ based at $o$. Concatenating the geodesic segment from $o'$ to $F(o)$ with $F \circ \gamma$ provides us with a $(K, C + \frac{1}{2} \sqrt{n})$-quasi-geodesic representative of $f(\xi)$ which -- by assumption -- crosses the hyperplane $k'$. We need to show that for $\eta \in U_{o, h_1, \dots, h_l}$, the geodesic representative of $f(\eta)$ based at $o'$ crosses $k'$. For this, consider the geodesic representative $\delta$ of $\eta$ in $Y$ based at $o$. Since $\gamma$ is Morse, we can choose $h_1, \dots, h_l$ such that for all $\eta \in U_{o, h_1, \dots, h_l}$ its geodesic representative $\delta$ stays close to $\gamma$ for a long distance. While this implies that the image $F \circ \delta$ crosses the hyperplane $k'$ (if $\delta$ and $\gamma$ fellow-travel for sufficiently long), it is not obvious at all that $F \circ \delta$ does not travel back and crosses $k'$ again, implying that the geodesic representative of $f(\eta)$ does not cross $k'$ at all (cf. Figure \ref{fig:Back-Travel}). Note that, if $F \circ \delta$ does travel back, it cannot stay close to $F \circ \gamma$ while doing so, as it is a quasi-geodesic whose constants are controlled by $F$.

Proving that $f$ is continuous proves that $h_1, \dots, h_l$ can be chosen such that this kind of back-traveling does not occur. A quasi-isometry that exhibits such back-traveling would provide a counter-example to quasi-isometry invariance. Thus, we finish with the following

\begin{figure}
\begin{tikzpicture}[scale=1.50]
\draw [fill] (-5.5,0) -- (-2.9,0);
\draw (-5.5,0) to [out = 2, in = 181] (-3.9, 0.15);
\draw (-3.9, 0.15) to [out = 2, in = 270] (-3.65, 1.3);
\draw (-5.5,0) to [out = 1, in = 182] (-3.6, 0.1);
\draw (-3.6, 0.1) to [out = 1, in = 270] (-3.35, 1.3);
\draw (-5.5,0) to [out = 0.5, in = 180.5] (-3.3, 0.05);
\draw (-3.3, 0.05) to [out = 0.5, in = 270] (-3.05, 1.3);
\draw [fill] (-4, 1.2) -- (-4, -0.3);
\node [below] at (-4,-0.3) {$h_1$};
\draw [fill] (-4.2,1) -- (-3.75, -0.4);
\node [below] at (-3.75, -0.4) {$h_l$};
\node [right] at (-2.9, 0) {$\xi$};
\node [above] at (-3.65, 1.3) {$\eta_1$};
\node [above] at (-3.35, 1.3) {$\eta_2$};
\node [above] at (-3.05, 1.3) {$\eta_3$};
\node [left] at (-5.5,0) {$o$};
\draw [->] (-2.5,0.5) -- (-1.1, 0.5);
\node [below] at (-1.8, 0.5) {$F$};
\draw (-1, 0) to [out = 0, in = 200] (0, 0);
\draw (0, 0) to [out = 20, in = 160] (0.5, 0);
\draw (0.5, 0) to [out = -20, in = 170] (1, 0);
\draw (1, 0) to [out = -10, in = 180] (2, 0);
\node [right] at (2, 0) {$f(\xi)$};
\draw (-1, 0) to [out = 20, in = 210] (0, 0.2);
\draw (0, 0.2) to [out = 30, in = 270] (1, 0.4);
\draw (1, 0.4) to [out = 90, in = -10] (0.5, 0.7);
\draw (0.5, 0.7) to [out = 170, in = 20] (0.3, 0.7);
\draw (0.3, 0.7) to [out = 200, in = -50] (0, 0.9);
\draw (0, 0.9) to [out = 130, in = 10] (-0.2, 1.1);
\draw (-0.2, 1.1) to [out = 190, in = 270] (-0.5, 1.3);
\node [above] at (-0.6, 1.3) {$f(\eta_1)$};
\draw (-1, 0) to [out = 15, in = 210] (0, 0.1);
\draw (0, 0.1) to [out = 30, in = 270] (1.3, 0.3);
\draw (1.3, 0.3) to [out = 90, in = -10] (0.6, 0.9);
\draw (0.6, 0.9) to [out = 170, in = 20] (0.3, 1);
\draw (0.3, 1) to [out = 200, in = -50] (0.1, 1.1);
\draw (0.1, 1.1) to [out = 130, in = 10] (0, 1.2);
\draw (0, 1.2) to [out = 190, in = 270] (-0.2, 1.6);

\node [above] at (-0.3, 1.6) {$f(\eta_2)$};
\draw (-1, 0) to [out = 10, in = 210] (0, 0.05);
\draw (0, 0.05) to [out = 30, in = 180] (1, 0.05);
\draw (1, 0.05) to [out = 0, in = 270] (1.5, 0.1);
\draw (1.5, 0.1) to [out = 90, in = -10] (0.8, 1.1);
\draw (0.8, 1.1) to [out = 170, in = -20] (0.5, 1.3);
\draw (0.5, 1.3) to [out = 160, in = -50] (0.4, 1.4);
\draw (0.4, 1.4) to [out = 130, in = 270] (0.3, 1.6);

\node [above] at (0.3, 1.6) {$f(\eta_3)$};
\draw [fill] (0.5, 1.5) -- (0.5, -0.3);
\node [below] at (0.5, -0.3) {$k'$};
\end{tikzpicture}
\caption{Is there a quasi-isometry $F$ such that the images of fellow-traveling, contracting geodesics look like this?}
\label{fig:Back-Travel} 
\end{figure}
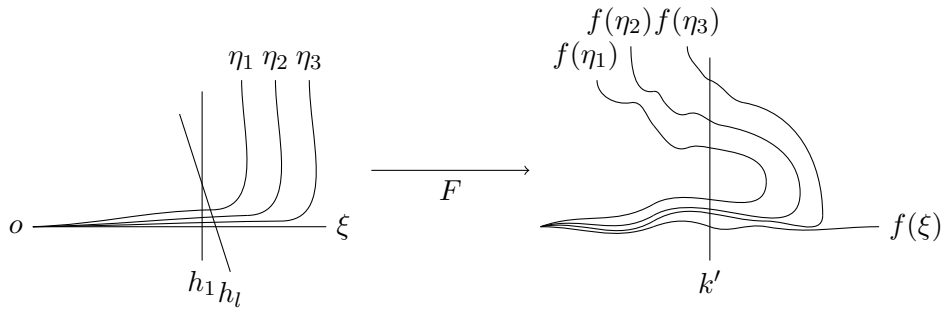

\begin{quest*}
Is there a quasi-isometry between `nice' $\CAT$ cube complexes displaying the `back-traveling' described above?
\end{quest*}

\bibliographystyle{alpha}
\bibliography{mybib}

\end{document}